\numberwithin{equation}{section}
\def\mE{\mathcal{E}}
\def\mF{\mathcal{F}}
\def\mH{\mathcal{H}}
\def\mM{\mathcal{M}}
\def\mN{\mathcal{N}}
\newtheorem{thm}{Theorem}[section]
\newtheorem{lemma}[thm]{Lemma}
\newtheorem{prop}[thm]{Proposition}
\newtheorem{cor}[thm]{Corollary}
\theoremstyle{definition}
\newtheorem{rem}[thm]{Remark}
\theoremstyle{definition}
\theoremstyle{definition}
\newcommand{\be}{\begin{eqnarray}}
\newcommand{\ee}{\end{eqnarray}}
\newcommand{\comment}[1]{}
\begin{document}

\title{Positive scalar curvature on   foliations: the noncompact case}
 
\author{Guangxiang Su \ and \ Weiping Zhang}

\address{Chern Institute of Mathematics \& LPMC, Nankai
University, Tianjin 300071, P.R. China}
\email{guangxiangsu@nankai.edu.cn,\ weiping@nankai.edu.cn}

\begin{abstract}  Let $(M,g^{TM})$ be a  noncompact enlargeable Riemannian manifold in the sense of Gromov-Lawson and $F$ an integrable subbundle of $TM$. Let $k^{F}$ be the leafwise scalar curvature associated to $g^F=g^{TM}|_F$. We show that if either $TM$ or $F$ is spin, then ${\rm inf}(k^F)\leq 0$. This generalizes   earlier claims for hyper-Euclidean spaces   made by Gromov.
￼
\end{abstract}

\maketitle
\tableofcontents

\setcounter{section}{-1}

\section{Introduction} \label{s0}

The concept of enlargeability due to Gromov and Lawson  has played an important role in their classical papers \cite{GL80}, \cite{GL83} on positive scalar curvature.
Following    \cite[Definition 7.1]{GL83} (see also \cite[Definition 1.1]{CS18}),  a (connected) Riemannian manifold $(M,g^{TM})$   is called enlargeable if for any $\varepsilon>0$, there is a (connected) covering $\widetilde M\rightarrow M$ and a smooth   map $f:\widetilde M\rightarrow S^{\dim M}(1)$ of nonzero degree such that $f$ is constant near infinity and that for any $X\in\Gamma(T\widetilde M)$, one has $|f_*(X)|\leq \varepsilon|X|$.  It is easy to see that when $M$ is compact, the concept of enlargeability does not depend on the choice of the Riemannian metric $g^{TM}$. However, when $M$ is noncompact, this concept does depend on the chosen metric (cf. \cite[Theorem B]{CS18}). 

A famous result of Gromov-Lawson \cite[Theorem 7.3]{GL83} states that if $(M,g^{TM})$ is  a spin enlargeable complete Riemannian manifold and $k^{TM}$ is the scalar curvature associated to $g^{TM}$, then $\inf(k^{TM})\leq 0$. That is, there is no uniform positive lower bound of $k^{TM}$.

In this paper, we generalize the above result to the case of foliations. To be more precise,
let $F \subseteq TM$ be an
integrable subbundle of the tangent bundle $TM$ of $M$.
Let $g^F=g^{TM}|_F$ be the restricted
  Euclidean metric  on $F$, and
$k^F\in C^\infty(M)$  be  the associated leafwise scalar curvature  (cf. \cite[(0.1)]{Z17}).   

With the above notation, the main result of this paper can be stated as follows. 

\begin{thm}\label{t0.1} 
Let $F\subseteq TM$ be an integrable subbundle of the tangent bundle of a spin enlargeable Riemannian manifold $(M,g^{TM})$ and $k^F$  the leafwise scalar curvature associated to $g^F=g^{TM}|_F$, then one has ${\rm inf}(k^F)\leq 0$.  
\end{thm}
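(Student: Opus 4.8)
The plan is to argue by contradiction, transplanting the sub-Dirac operator and the adiabatic-limit technique of \cite{Z17} to a suitable covering of $M$ and combining it with the relative index theorem of Gromov--Lawson for noncompact enlargeable spin manifolds \cite{GL83}. Suppose $\inf(k^F)>0$, so $k^F\geq c_0$ on $M$ for some $c_0>0$. If $\dim M$ is odd, first replace $(M,F,g^{TM})$ by $(M\times S^1,\,F\oplus TS^1,\,g^{TM}\oplus(d\theta)^2)$, which changes neither $\inf(k^F)$ nor spin enlargeability; so we may assume $n:=\dim M$ is even. Fix $\varepsilon>0$, to be chosen. Enlargeability gives a connected covering $\pi\colon\widetilde M\to M$ and a smooth map $f\colon\widetilde M\to S^n(1)$ of nonzero degree, constant outside a compact set $\widetilde K\subset\widetilde M$, with $|f_*X|\leq\varepsilon|X|$ for all $X\in\Gamma(T\widetilde M)$. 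Pulling back $F$, $g^{TM}$ and the $g^{TM}$-orthogonal splitting $TM=F\oplus F^\perp$ to $\widetilde M$, the leafwise scalar curvature of $\widetilde F:=\pi^*F$ is $\pi^*k^F\geq c_0$ (leafwise scalar curvature is local in $g^F$), and $\widetilde M$ is spin since $M$ is. Following \cite{Z17}, I would work with the sub-Dirac operator $D'$: the Dirac operator on the spinor bundle $S(T\widetilde M)$ attached to the block-diagonal metric connection $\nabla^{\widetilde F}\oplus\nabla^{\widetilde F^\perp}$ on $T\widetilde M=\widetilde F\oplus\widetilde F^\perp$ adapted to the foliation, in place of the Levi-Civita connection. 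Changing the connection preserves the principal symbol, so $D'$ has the same index as the Atiyah--Singer Dirac operator of $(\widetilde M,g^{T\widetilde M})$, and its index density is $\widehat A(T\widetilde M)$, whose degree-zero part is $1$; this --- that $D'$ carries no additional Clifford factor along the normal directions --- is exactly what makes the index argument below work.

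By the Lichnerowicz-type identity of \cite{Z17}, after introducing the adiabatic metrics $g_\beta=g^{\widetilde F}\oplus\beta^{-2}g^{\widetilde F^\perp}$ one obtains a Weitzenb\"ock formula of the shape $(D'_\beta)^2=\nabla_\beta^{*}\nabla_\beta+\tfrac14\,\pi^*k^F+\beta\,\mathcal R_\beta$, with $\mathcal R_\beta$ a self-adjoint endomorphism built from the second fundamental form of the leaves, the curvature of $\widetilde F^\perp$ and the integrability tensors of the foliation, and with $\|\mathcal R_\beta(x)\|\leq\Phi(x)$ for some $\beta$-independent continuous $\Phi=\pi^*\Phi_0$ on $\widetilde M$. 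I would then twist by $\widetilde E:=f^*E_0$, where $E_0\to S^n(1)$ is a Hermitian bundle with unitary connection chosen so that $\int_{S^n}\ch(E_0)\neq 0$ and which is flat near the point $f(\widetilde M\setminus\widetilde K)$; thus $\widetilde E$ is trivial off $\widetilde K$, so $D'_\beta\otimes\widetilde E$ and $D'_\beta\otimes\C^{\operatorname{rk}(E_0)}$ agree there, while $\varepsilon$-contractedness of $f$ bounds the curvature of $\widetilde E$, hence the additional zeroth-order term $\mathcal R^{\widetilde E}$ in the twisted formula, by $\|\mathcal R^{\widetilde E}\|\leq C\varepsilon^2$ (and $\mathcal R^{\widetilde E}=0$ off $\widetilde K$). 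Now fix $\varepsilon$ with $C\varepsilon^2<c_0/8$, and choose a smooth $\beta\colon\widetilde M\to(0,1]$ equal to a small constant on $\widetilde K$ and tending to $0$ at infinity fast enough relative to $\Phi$ and $d\Phi$ that the associated position-dependent adiabatic operator satisfies $\beta(x)\|\mathcal R_\beta(x)\|+(d\beta\text{-terms})<c_0/8$ everywhere; this is possible because $\widetilde K$ is compact and $\Phi$ is continuous. Then $(D'_\beta)^2\geq c_0/8>0$ and $(D'_\beta\otimes\widetilde E)^2\geq c_0/16>0$ on all of $\widetilde M$, so both operators are invertible in $L^2$, coercive at infinity, and Fredholm of index $0$.

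On the other hand, $D'_\beta\otimes\widetilde E$ and $D'_\beta\otimes\C^{\operatorname{rk}(E_0)}$ coincide outside the compact set $\widetilde K$ and are coercive at infinity, so the Gromov--Lawson relative index theorem \cite{GL83} applies and identifies $\ind(D'_\beta\otimes\widetilde E)-\operatorname{rk}(E_0)\,\ind(D'_\beta)$ with the compactly supported characteristic number $\bigl\langle\widehat A(T\widetilde M)\,(\ch(\widetilde E)-\operatorname{rk}(E_0)),[\widetilde M]\bigr\rangle=\deg(f)\int_{S^n}\ch(E_0)\neq 0$, where the nonvanishing uses that the degree-zero part of $\widehat A(T\widetilde M)$ is $1$ and that $\ch(\widetilde E)-\operatorname{rk}(E_0)$ is a compactly supported closed form with integral $\deg(f)\int_{S^n}\ch(E_0)$. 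This contradicts the vanishing of both indices; hence $\inf(k^F)\leq 0$.

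\textbf{Principal difficulty.} The main obstacle --- absent in the spin case of \cite{GL83}, where the Lichnerowicz formula has no remainder, and in the closed foliation case of \cite{Z17}, where the adiabatic remainder is automatically bounded --- is controlling $\beta\,\mathcal R_\beta$ on the \emph{noncompact} cover $\widetilde M$: the tensors entering $\mathcal R_\beta$ may be unbounded near infinity, which forces a position-dependent adiabatic parameter $\beta(\cdot)$. One must then verify that the resulting $d\beta$-contributions stay subordinate, that the operators remain essentially self-adjoint and coercive at infinity (so that the relative index theorem genuinely applies), and that the localized index computation over $\widetilde K$ is unaffected by these choices --- it is, being topological. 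A secondary task is to set up the sub-Dirac Lichnerowicz formula and its adiabatic estimates in the needed generality; and to cover the case where $F$ (rather than $TM$) is spin, as in the abstract, one must in addition pass to the Connes fibration over $\widetilde M$ and replace the transverse spinors by a harmonic-oscillator (Witten-type) deformation, so that the transverse directions still contribute a factor $1$ to the index.
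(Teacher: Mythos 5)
Your strategy---sub-Dirac operator, adiabatic limit, twisting by $f^*E_0$, then the Gromov--Lawson relative index theorem---is genuinely different from the paper's, and it contains a gap that the paper's architecture is specifically built to avoid. The crux is your claimed Weitzenb\"ock formula $(D'_\beta)^2=\nabla_\beta^*\nabla_\beta+\tfrac14\pi^*k^F+\beta\,\mathcal R_\beta$ with $\|\mathcal R_\beta(x)\|\le\Phi(x)$ for a $\beta$-independent continuous $\Phi$, carried out directly on $\widetilde M$ with the splitting $T\widetilde M=\widetilde F\oplus\widetilde F^\perp$. For a general integrable $F$ this is false: the remainder contains terms coming from the leafwise variation of $g^{F^\perp}$ (the failure of the Bott connection on $TM/F$ to be metric-compatible), and under the adiabatic rescaling these terms do not carry a factor of $\beta$; they persist or blow up as $\beta\to 0$, so no choice of a position-dependent $\beta(\cdot)$ makes $(D'_\beta)^2$ positive. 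This is exactly why \cite{Z17}, and the present paper, pass to the Connes fibration $\mM_\varepsilon\to M_\varepsilon$ \emph{even when $TM$ is spin}: only after lifting $F$ to $\mF_\varepsilon$ via the leafwise flat Bott connection and introducing the second parameter $\gamma$ in the metric (\ref{0.6}) does one obtain the subordinate remainder $O_{\varepsilon,R}(\beta^{-1}+\gamma^2\beta^{-2})$ of (\ref{0.12}) against the leading term $k^{\mF_\varepsilon}/(4\beta^2)$. You relegate the Connes fibration to the $F$-spin variant (Theorem \ref{t0.3}), but it is indispensable for Theorem \ref{t0.1} as well; and even there the estimate holds only on the truncation $\mM_{\mH_\varepsilon,R}$ (the constants depend on $R$, the fibers being noncompact), so there is no global positivity statement of the kind your argument needs.

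The second problem is the reliance on the relative index theorem, which the paper explicitly declines to use: it requires completeness of the metric, not assumed here, and a variable adiabatic parameter $\beta(x)\to 0$ takes you outside its hypotheses in any case (the two operators being compared are no longer Dirac operators of a fixed complete metric agreeing near infinity in the required sense, and the $d\beta$-contributions are Clifford multiplications against the rescaled frame whose size you have not controlled). The paper instead compactifies by hand: it cuts $M_\varepsilon$ along a hypersurface $H_\varepsilon$ outside $K_\varepsilon$, doubles, truncates the Connes fibration at radius $R$, embeds the result into a closed manifold $\widetilde\mN_{\varepsilon,R}$, and deforms the sub-Dirac operator by the odd endomorphisms $\widehat c(\widetilde\sigma)/\beta+W_{\widehat f_{\varepsilon,R}}/\beta$ of (\ref{0.15}), invertible where the geometry is uncontrolled; the contradiction then comes from the closed-manifold Atiyah--Singer theorem applied to the zeroth order operator (\ref{0.29}) together with the coercivity estimates of Lemma \ref{t0.4}. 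Your reduction to even dimension, the choice of $E_0$ with $\langle\ch(E_0),[S^{\dim M}(1)]\rangle\neq 0$, and the $O(\varepsilon^2)$ curvature bound from the $\varepsilon$-contracting map all match the paper, but without the Connes fibration and without a substitute for the relative index theorem the proof does not go through.
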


When $F=TM$ and $g^{TM}$ is complete, one recovers the above Gromov-Lawson result. When $M$ is compact, Theorem \ref{t0.1} has been proved (at least in details  for compactly enlargeable manifolds) in \cite{Z19}. So in this paper we will concentrate on the  case of noncompact $M$. 

Recall that in the case of noncompact $M$ and $F=TM$, Gromov and Lawson make use of the relative index theorem in \cite[\S 4]{GL83} to prove their result. However, in the case of general $F$ here, even if $(M,F,g^{TM})$ is a  Riemannian foliation, one does not get a positive lower bound of $k^{TM}$ over $M$  if one only assumes that $\inf(k^F)>0$. This indicates that one can not use the relative index theorem  directly to prove Theorem \ref{t0.1}.
To overcome this difficulty, we will use the method developed in \cite{Z19} where a proof without using the relative index theorem   is given for the Gromov-Lawson theorem   on   spin enlargeable manifolds. This amounts to deform the Dirac operator in question by  endomorphisms of the involved (twisted) ${\bf Z}_2$-graded vector bundle, which are invertible near infinity (cf. \cite[(1.11)]{Z19} and (\ref{0.15})). 

On the other hand, since the Euclidean space ${\bf R}^n$ is enlargeable (cf. \cite{GL83}), as a direct consequence to Theorem \ref{t0.1}, one gets the following result stated by Gromov \cite[\S 3.12]{Gr19} (see also \cite[p. 192]{Gr96}, where the case of $M={\bf R}^n$ is  stated). 
 
\begin{cor}\label{t0.2}   
Let $(M,g^{TM})$ be a complete Riemannian manifold verifying $H^2(M,{\bf Z}_2)=0$ and
$F\subseteq TM$ an integrable subbundle of $TM$ such that the leafwise scalar curvature $k^F$ associated with $g^{TM}|_F$ verifies ${\rm inf}(k^F)>0$, then $(M,g^{TM})$ admits no distance non-increasing proper map $M\rightarrow {\bf R}^n$ with nonzero degree. 
\end{cor}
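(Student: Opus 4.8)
The plan is to deduce this from Theorem \ref{t0.1} by contradiction, so I would begin by assuming that $(M,g^{TM})$ admits a distance non-increasing proper map $\phi\colon M\to{\bf R}^n$ of nonzero degree and aim to show that $(M,g^{TM})$ is then a spin enlargeable manifold. For the spin part: since the degree is a nonzero integer, $M$ must be oriented with $\dim M=n$, so $w_1(TM)=0$; as $w_2(TM)$ lies in $H^2(M,{\bf Z}_2)=0$ we also have $w_2(TM)=0$, and hence $TM$ admits a spin structure. It then remains to verify enlargeability in the sense recalled in Section \ref{s0}.

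For the enlargeability part I would use that ${\bf R}^n$ itself is enlargeable (cf. \cite{GL83}): since it is simply connected, this gives, for every $\varepsilon>0$, a smooth map $h_\varepsilon\colon{\bf R}^n\to S^n(1)$ of nonzero degree which is constant outside a compact set $K_\varepsilon\subseteq{\bf R}^n$ and satisfies $|h_{\varepsilon *}(Y)|\leq\varepsilon|Y|$ for all $Y$; concretely, one may take $h_\varepsilon$ to be a suitably rescaled collapse map ${\bf R}^n\to{\bf R}^n/\{|x|\geq R\}\cong S^n(1)$ with $R$ large. I would then set $f_\varepsilon:=h_\varepsilon\circ\phi\colon M\to S^n(1)$, with the trivial covering $\widetilde M=M$, and check the three defining properties: from $\phi$ distance non-increasing one gets $|\phi_*(X)|\leq|X|$, hence $|f_{\varepsilon *}(X)|=|h_{\varepsilon *}(\phi_*X)|\leq\varepsilon|X|$ for all $X\in\Gamma(TM)$; from $\phi$ proper one gets that $\phi^{-1}(K_\varepsilon)$ is compact and that $f_\varepsilon$ is constant off it, i.e. constant near infinity; and $\deg f_\varepsilon=\deg h_\varepsilon\cdot\deg\phi\neq0$. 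This exhibits $(M,g^{TM})$ as spin and enlargeable, so Theorem \ref{t0.1} gives ${\rm inf}(k^F)\leq0$, contradicting the hypothesis ${\rm inf}(k^F)>0$.

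I do not expect a genuinely hard step: all of the mathematical substance is in Theorem \ref{t0.1}, and the rest is bookkeeping. The small points I would want to state carefully are that ``distance non-increasing'' is equivalent to the pointwise bound $|\phi_*(X)|\leq|X|$ (true for the length metric of a connected Riemannian manifold, with no appeal to completeness needed for this particular equivalence), that degree is multiplicative under composition of proper maps between equidimensional oriented manifolds, and that properness of $\phi$ is exactly what propagates the ``constant near infinity'' property from $h_\varepsilon$ to $f_\varepsilon=h_\varepsilon\circ\phi$.
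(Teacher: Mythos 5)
Your proposal is correct and is exactly the argument the paper intends: Corollary \ref{t0.2} is presented there as a direct consequence of Theorem \ref{t0.1} via the enlargeability of ${\bf R}^n$, with $H^2(M,{\bf Z}_2)=0$ (together with the orientability forced by the nonzero-degree hypothesis) supplying the spin structure. The details you fill in — composing the $\varepsilon$-contracting collapse maps of ${\bf R}^n$ with the distance non-increasing proper map, using properness to propagate constancy near infinity, and multiplicativity of degree — are all accurate and are precisely what the paper leaves implicit.
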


Gromov \cite{Gr19} indicates that he would prove Corollary \ref{t0.2} by making use of the  following Connes vanishing theorem \cite{Co86} which generalizes the famous Lichnerowicz vanishing theorem \cite{L63} to the case of foliations:   an integrable spin subbundle of the tangent bundle of a closed oriented manifold of nonzero $\widehat A$-genus\footnote{See for example \cite[\S 1.6.3]{Z01} for the definition of the $\widehat A$-genus.} admits no Euclidean metric of positive leafwise scalar curvature. 
Recall that Connes \cite{Co86} proves his celebrated theorem by making use of cyclic cohomology as well as  the Connes-Skandalis longitudinal index theorem for foliations \cite{CoS84}. 

In \cite{Z17}, Zhang gives a  differential geometric proof of Connes' result. Moreover, he obtains the following alternate generalization of the Lichnerowicz vanishing theorem to the case of foliations: there is no Euclidean metric of positive leafwise scalar curvature on any integrable subbundle $F$ of the tangent bundle of a closed spin manifold $M$ with   $\widehat A(M)\neq 0$.  

We will combine the methods in \cite{Z17} and \cite{Z19} to prove Theorem \ref{t0.1}.  In particular, the sub-Dirac operators constructed \cite{LZ01} and \cite{Z17}, as well as   the Connes fibration introduced in \cite{Co86}  (cf. \cite[\S 2.1]{Z17}), will play  essential roles in our proof.  Note that we do not assume a priori that $g^{TM}$ is complete. This is different with respect to what in \cite{GL83}. 

Moreover, just as in \cite[\S 2.5]{Z17} and \cite{Z19}, our method can also be used to prove the following alternate extension of Gromov-Lawson's result.

\begin{thm}\label{t0.3} 
Let $F\subseteq TM$ be a spin integrable subbundle of the tangent bundle of an  enlargeable Riemannian manifold $(M,g^{TM})$ and $k^F$  the leafwise scalar curvature associated to $g^F=g^{TM}|_F$, then one has ${\rm inf}(k^F)\leq 0$.  
\end{thm}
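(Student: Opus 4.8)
The plan is to run, almost verbatim, the proof of Theorem~\ref{t0.1}, changing only the single step where the spin hypothesis is used (for compact $M$ the statement is covered by the methods of \cite[\S 2.5]{Z17}, so I concentrate on noncompact $M$). I would argue by contradiction, assuming $\inf(k^F)=2c_0>0$. Replacing $(M,F,g^{TM})$ by $(M\times S^1,\,F\oplus TS^1,\,g^{TM}\oplus g^{TS^1})$ if necessary — which affects neither enlargeability, nor the spin-ness of $F$, nor $\inf(k^F)$ — I may assume $\dim M$ is even. By enlargeability, for every $\varepsilon>0$ there are a connected covering $p\colon\widetilde M\to M$ and a smooth map $f\colon\widetilde M\to S^{\dim M}(1)$ of nonzero degree, constant near infinity, with $|f_*(X)|\le\varepsilon|X|$ for all $X\in\Gamma(T\widetilde M)$. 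Pulling $F$, $g^{TM}$ and the spin structure of $F$ back along $p$, the leafwise scalar curvature of $p^*F$ is $p^*k^F\ge 2c_0$, so it is enough to produce a contradiction over $\widetilde M$.

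Next I would pass to the Connes fibration $\pi\colon\mM\to\widetilde M$ whose fibre over a point is the space of Euclidean metrics on the corresponding fibre of $T\widetilde M/p^*F$ (cf.\ \cite[\S 2.1]{Z17}); it is a homotopy equivalence, its fibres being contractible, and $T\mM=T\mF\oplus T^V\oplus\mH$, where $d\pi$ carries $T\mF$ isomorphically onto $p^*F$, $T^V$ is the vertical bundle, and $\mH$ is carried isomorphically onto $p^*(T\widetilde M/F)$. \emph{This is the only point at which the spin hypothesis is used.} Since $F$, hence $T\mF\cong\pi^*p^*F$, is spin, I have the genuine leafwise spinor bundle $S(T\mF)$, and, exactly as in \cite{Z17}, I build the sub-Dirac operator of \cite{LZ01} on $S(T\mF)\,\widehat{\otimes}\,\Lambda^{\bullet}\bigl((T^V\oplus\mH)^{*}\bigr)$, in which the $T\mF$-directions act by genuine Clifford multiplication $c$ while $T^V$ and $\mH$ are handled by the auxiliary action $\widehat c$ on the exterior algebra. (In Theorem~\ref{t0.1} one uses instead the spin structure of $TM$; the construction of \cite{LZ01} and \cite{Z17} accommodates either hypothesis while keeping $\tfrac14 k^F$ as the leafwise curvature term, and the choice of hypothesis is the sole difference between the two proofs.) I then twist by the pullback $\pi^*f^*E$ of a bundle $E\to S^{\dim M}(1)$ with non-trivial top-degree Chern character — so that $\pi^*f^*E$ is trivial with trivial connection near infinity — and, following \cite[(1.11)]{Z19}, deform by adding $T\mV$, where $\mV=\bigl(\begin{smallmatrix}0&V^{*}\\ V&0\end{smallmatrix}\bigr)$ is the odd self-adjoint endomorphism of the $\Z_2$-graded bundle $\pi^*f^*E\oplus\C^{N}$ associated to a bundle map $V\colon\pi^*f^*E\to\C^{N}$ that is a unitary, parallel isomorphism off a compact set — in particular near infinity — and degenerates only over a compact subset of the region where $f$ is non-constant. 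Write $\mD_T$ for the resulting operator.

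The Lichnerowicz--Bochner formula for $\mD_T^{2}$ then has the schematic form
\[
\mD_T^{2}=\Delta+\tfrac14\,k^F+R_{\mathrm{sub}}+\tfrac12\,c\bigl(R^{\pi^{*}f^{*}E}\bigr)+T^{2}\mV^{2}+T\,c(\nabla\mV),
\]
where $R_{\mathrm{sub}}$ collects the curvature of $T^V$, that of $\mH$, and the mixed terms. Near infinity $\mV^{2}=\Id$, $\nabla\mV=0$ and $R^{\pi^{*}f^{*}E}=0$, so $\mD_T^{2}\ge T^{2}$ there; together with the control of the sub-Dirac operator towards the noncompact ends of the Connes fibres (built in as in \cite{Z17}), this makes $\mD_T$ Fredholm for every $T>0$ even though $g^{TM}$ need not be complete, and then $\ind\mD_T$ is independent of $T$. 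Letting $T\to\infty$ localizes the index near the compact degeneracy set of $V$, and the local index computation there — the Gromov--Lawson relative-index computation, the vertical and transverse directions of the Connes fibration contributing, as in the compact case of \cite{Z17}, a characteristic class with degree-$0$ part $1$ — shows that $\ind\mD_T$ equals, up to sign, the integral over $\widetilde M$ of the top-degree component of $f^{*}\ch(E)$, namely $\deg(f)\int_{S^{\dim M}(1)}\ch(E)\neq0$. On the other hand, under $\inf(k^F)>0$ the error terms are killed by two limits: the adiabatic limit of \cite{Z17} — rescaling the metric of $\mM$ in the $T^V$- and $\mH$-directions — drives $R_{\mathrm{sub}}\to0$, since each of its dangerous terms carries a $\widehat c$ or a rescaled direction; and, $f$ being $\varepsilon$-contracting with $E$ fixed, $R^{\pi^{*}f^{*}E}=O(\varepsilon)$, and likewise $\nabla\mV=O(\varepsilon)$ wherever it is supported (because $f$ spreads its variation over regions of diameter $\gtrsim\varepsilon^{-1}$). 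Fixing $T=1$, then choosing the rescaling and afterwards $\varepsilon$ small enough, one gets $\tfrac14 k^F+R_{\mathrm{sub}}+\tfrac12 c(R^{\pi^{*}f^{*}E})\ge\tfrac14 c_0$ everywhere and $|c(\nabla\mV)|\le\tfrac18 c_0$, so for every $u\in\Ker\mD_1$,
\[
0=\|\mD_1u\|^{2}\ge\|\nabla u\|^{2}+\|\mV u\|^{2}+\bigl(\tfrac14 c_0-\tfrac18 c_0\bigr)\|u\|^{2}\ge\tfrac18 c_0\,\|u\|^{2}.
\]
Hence $\Ker\mD_1=0$ and $\ind\mD_1=0$, contradicting $\ind\mD_1\ne0$. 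Therefore $\inf(k^F)\le0$.

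The hard part, as in \cite{Z17} and \cite{Z19}, will be to manage the two independent sources of noncompactness at once: the adiabatic limit along the Connes fibration that makes $R_{\mathrm{sub}}$ negligible has to be performed uniformly over the noncompact total space $\mM$, with uniform control of the sub-Dirac operator towards the fibre-ends, while the $T\mV$-deformation of \cite{Z19} must stay in force so that $\mD_T$ is genuinely Fredholm and $\ind\mD_T$ is well defined, invariant under all the auxiliary homotopies, and equal to $\pm\deg(f)$. Checking that all of this survives the absence of completeness of $g^{TM}$, and that the various smallness regimes — the rescaling parameter, $\varepsilon$, and the placement of the transition region of $V$ — can be chosen consistently so that the $T\,c(\nabla\mV)$ cross-term never destroys positivity on the degeneracy locus of $\mV$, is where the real work lies; the remainder is a careful transcription of the arguments of \cite{Z17} and \cite{Z19}.
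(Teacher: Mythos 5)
Your central idea is the correct one, and it is precisely what the paper itself prescribes for Theorem \ref{t0.3} (its entire ``proof'' is Remark \ref{t0.6}): rerun the argument for Theorem \ref{t0.1}, replacing the spinor bundle of $\mF_\varepsilon\oplus\mF_{\varepsilon,1}^\perp$ by the spinor bundle of the spin bundle $\mF_\varepsilon$ alone and letting the exterior algebra absorb $\mF_{\varepsilon,1}^\perp\oplus\mF_{\varepsilon,2}^\perp$, as in \cite[\S 2.5]{Z17}; the leafwise curvature term $\tfrac14 k^{\mF_\varepsilon}$ and the nonvanishing of the topological index both survive this swap. So the one place where you localize the use of the spin hypothesis is exactly right.

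The problem is the index-theoretic scaffolding you wrap around this idea, which is not the paper's and contains a gap you flag but do not close: you assert that $\mD_T$ is ``Fredholm for every $T>0$ even though $g^{TM}$ need not be complete,'' and the whole contradiction rests on $\ind\mD_T$ being defined and computable. On an incomplete manifold a Dirac-type operator need not be essentially self-adjoint, and invertibility of the zeroth-order part near infinity does not by itself produce a Fredholm $L^2$-theory; moreover the Connes fibres are noncompact symmetric spaces, so there are two independent ends to control simultaneously. The paper never confronts this analytic issue on the open manifold: it cuts $M_\varepsilon$ along a compact hypersurface $H_\varepsilon$ outside $K_\varepsilon$, doubles $M_{H_\varepsilon}$, restricts the Connes fibration to the finite-radius disk bundle $\mM_{\mH_\varepsilon,R}$, doubles again, caps off $\partial\widehat\mM_{\mH_\varepsilon,R}$ to a \emph{closed} manifold $\widetilde\mN_{\varepsilon,R}$, and then studies the zeroth-order pseudodifferential operator of (\ref{0.29}) there. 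Its index is computed by the closed-manifold Atiyah--Singer theorem through a symbol homotopy as in (\ref{0.30})--(\ref{0.31}), and is forced to vanish by the estimates of Lemma \ref{t0.4} and Proposition \ref{t0.5}, in which the cutoff $\widetilde h_R$, the fibre deformation $\widehat c(\widetilde\sigma)/\beta$ and the endomorphism $W_{\widehat f_{\varepsilon,R}}/\beta$ each neutralize one source of noncompactness. Without this compactification, or a substitute proof of essential self-adjointness and Fredholmness on the incomplete total space, your $\ind\mD_T$ is not yet defined and the contradiction does not go through. A secondary inaccuracy: the smallness of the cross-term $c(\nabla\mV)$ in the $\mH$-directions does not follow from the $\varepsilon$-contracting property of $f$ alone (which, after lifting, controls only the $\mF_\varepsilon$-directions as in (\ref{0.24c})); in the $\mF_{\varepsilon,1}^\perp$-directions the commutator is merely $O_{\varepsilon,R}(|X|)$ as in (\ref{0.24d}), and it is the $\gamma$-rescaling of $g^{\mF_{\varepsilon,1}^\perp}$ in (\ref{0.6}) that renders its contribution $O_{\varepsilon,R}(\gamma/\beta)$ and hence negligible.
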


When $M$ is compact and the homotopy groupoid of $(M,F)$ is Hausdorff, Theorem \ref{t0.3} is proved by Bernameur and   Heitsch in \cite{BH19}. In \cite{Z19}, Zhang eliminates this  Hausdorff condition.  Thus in what follows, we need only to concentrate on the case of noncompact $M$. 

\begin{rem}\label{t0.3a} 
 Cecchini and Schick  \cite[Theorem A]{CS18} succesfully extend the above mentioned Gromov-Lawson result to the case of nonspin (and not necessarily complete) Riemannian manifolds. The method in \cite{CS18} relies on the minimal hypersurface techniques of Schoen-Yau \cite{SY17}. It remians an interesting question whether this method can be extended to the case of foliations. 
\end{rem}

\section{Proof of Theorems \ref{t0.1} and \ref{t0.3}}\label{s1}

 In this section, we prove Theorems \ref{t0.1} and \ref{t0.3}. In Section \ref{s1.1}, we recall the basic geometric set up. In Section \ref{s1.2}, we lift things to the Connes fibration. In Section \ref{s1.3}, we study the deformed sub-Dirac operators on the Connes fibration. In Section \ref{s1.4}, we  prove Theorem \ref{t0.1} in 
detail.    The proof of Theorem \ref{t0.3} is similar.

\subsection{Foliations on noncompact enlargeable manifolds
}\label{s1.1} 

Let $M$ be a noncompact (connected) smooth manifold and $g^{TM}$ a Euclidean metric on the tangent bundle $TM$. Following \cite[Definition 7.1]{GL83} (see also \cite[Definition 1.1]{CS18}), we say that the Riemannian metric $g^{TM}$ is enlargeable if for any $\varepsilon>0$, there is a (connected) covering $\pi_\varepsilon:  M_\varepsilon\rightarrow M$ and a smooth map $f_\varepsilon: M_\varepsilon \rightarrow S^{\dim M}(1)$, where $S^{\dim M}(1)$ is the standard unit sphere of dimension $\dim M$,  such that there exists a compact subset $K_\varepsilon\subset M_\varepsilon$ verifying that $f_\varepsilon$ is constant on $M_\varepsilon\setminus K_\varepsilon$  and   $\deg(f_\varepsilon)\neq 0$.  Moreover,   for any $X\in \Gamma(T  M_\varepsilon)$, one has
\begin{align}\label{0.1}
\left|f_{\varepsilon*}(X)\right|\leq \varepsilon|X|_{g^{T M_\varepsilon}},
\end{align}
where $g^{T M_\varepsilon}$ is the lifted metric   $\pi_\varepsilon^*g^{TM}$.

From now on, we assume that $g^{TM}$ is enlargeable. Without loss of generality, we assume that for any $\varepsilon>0$, $f_\varepsilon(M_\varepsilon\setminus K_\varepsilon)=x_0\in S^{\dim M}(1)$, where $x_0$ is a fixed point on $S^{\dim M}(1)$. 


Without loss of generality, we assume that $\dim M$ is even.\footnote{One may consider $M\times S^1$ if $\dim M$ is odd.}

Let $F$ be an integrable subbundle of the tangent bundle $TM$. Let $g^F=g^{TM}|_F$ be the induced Euclidean metric on $F$. Let $k^F\in C^\infty(M)$ be the   leafwise scalar curvature associated to $g^F$ (cf. \cite[(0.1)]{Z17}).  

We will prove Theorems \ref{t0.1} and \ref{t0.3}  by contradiction. Thus we  assume first that 
there is  $\delta>0$ such that 
\begin{align}\label{0.2}
k^F\geq \delta\ \ \ {\rm over}\ \ M.
\end{align}

Let $F^\perp$ be the orthononal complement to $F$, i.e., we have the orthogonal splitting 
 \begin{align}\label{0.3}
TM=F\oplus F^\perp,\ \ \ g^{TM}=g^F\oplus g^{F^\perp}.
\end{align}

Let $(E_0,g^{E_0} )$ be a Hermitian vector bundle on $S^{\dim M}(1)$ verifying
\begin{align}\label{0.4}
\left\langle {\rm ch}\left(E_0\right),\left[S^{\dim M}(1)\right]\right\rangle\neq 0 
\end{align}
  and carrying a Hermitian connection $\nabla^{E_0}$.   Let $(E_1={\bf C}^k|_{S^{\dim M}(1)},g^{E_1},\nabla^{E_1})$, with $k={\rm rk}(E_0)$, be the canonical Hermitian trivial vector bundle, with trivial Hermitian connection, on $S^{\dim M}(1)$.  Let $w\in \Gamma({\rm Hom}(E_0, E_1))$ be an endomorphism such that $w|_{x_0}:E_0|_{x_0}\rightarrow E_1|_{x_0}$ is an isomorphism. Let $w^*: \Gamma(E_1)\rightarrow \Gamma(E_0)$ be the adjoint of $w$ with respect to   $g^{E_0}$ and $g^{E_1}$. Set 
\begin{align}\label{0.5}
W=w+w^*. 
\end{align}
Then the self-adjoint endomorphism $W:\Gamma(E_0\oplus E_1)\rightarrow \Gamma(E_0\oplus E_1)$ is invertible near $x_0$.

Let $\varepsilon>0$ be fixed temporarily. 

Let $(M_\varepsilon, F_\varepsilon)=\pi_\varepsilon^*(M,F)$ be the lifted foliation, with $g^{F_\varepsilon}=\pi^*_\varepsilon g^F$ being the lifted Euclidean metric on $F_\varepsilon$. The splitting (\ref{0.3}) lifts canonically to a splitting
 \begin{align}\label{0.5a}
TM_\varepsilon=F_\varepsilon\oplus F_\varepsilon^\perp,\ \ \ g^{TM_\varepsilon}=g^{F_\varepsilon}\oplus g^{F_\varepsilon^\perp}.
\end{align}

Following \cite{GL83}, we take a compact hypersurface $H_\varepsilon\subset M_\varepsilon\setminus K_\varepsilon$, cutting $M_\varepsilon$ into two parts such that   the compact part, denoted by $M_{H_\varepsilon}$, contains $K_\varepsilon$. Then   $M_{H_\varepsilon}$ is a compact smooth manifold with boundary $H_\varepsilon$.

Let $M'_{H_\varepsilon}$ be another copy of $M_{H_\varepsilon}$. We glue $M_{H_\varepsilon}$ and $M'_{H_\varepsilon}$ along $H_\varepsilon$ to get the double, which we denote by $\widehat M_{H_\varepsilon}$.

\subsection{The Connes fibration}\label{s1.2}

Following  \cite[\S 5]{Co86} (cf. \cite[\S 2.1]{Z17}), let $\widetilde \pi_\varepsilon:\mM_\varepsilon\rightarrow M_\varepsilon$ be the Connes
fibration over $M_\varepsilon$ such that for any $x\in M_\varepsilon$, $\mM_{\varepsilon,x}=\widetilde\pi_\varepsilon^{-1}(x)$
is the space of Euclidean metrics on the linear space $T_xM_\varepsilon/F_{\varepsilon,x}$.
Let  $T^V\mM_\varepsilon$ denote the vertical tangent bundle of the fibration
$\widetilde\pi_\varepsilon:\mM_\varepsilon\rightarrow M_\varepsilon$. Then it carries a natural metric
$g^{T^V\mM_\varepsilon}$ such that   any two points $p,\,q\in \mM_{\varepsilon,x}$ with $x\in M_\varepsilon$ can be joined by a unique geodesic along $\mM_{\varepsilon,x}$. Let $d^{\mM_{\varepsilon,x}}(p,q)$ denote the length of this geodesic.  
  
By  using the Bott connection
  on $TM_\varepsilon/F_\varepsilon$, which is leafwise flat, one  lifts $F_\varepsilon$ to an integrable subbundle
$\mF_\varepsilon$ of $T\mM_\varepsilon$. 
  Then $g^{F_\varepsilon}$   lifts to a Euclidean metric $g^{\mF_\varepsilon}=\widetilde \pi^*_\varepsilon g^{F_\varepsilon} $ on $\mF_\varepsilon$.

Let $\mF_{\varepsilon,1}^\perp\subseteq T\mM_\varepsilon$ be a subbundle, which is  transversal to $\mF_\varepsilon\oplus T^V\mM_\varepsilon$,   such that we have a
splitting $T\mM_\varepsilon=(\mF_\varepsilon \oplus T^V \mM_\varepsilon)\oplus\mF_{\varepsilon,1}^\perp$. Then
$\mF_{\varepsilon,1}^\perp$ can be identified with $T\mM_\varepsilon/(\mF_\varepsilon \oplus T^V \mM_\varepsilon)$
and carries a canonically induced metric $g^{\mF_{\varepsilon,1}^\perp}$.
We denote  $\mF_{\varepsilon,2}^\perp=T^V\mM_\varepsilon$.\footnote{We may well assume that $T\mM_\varepsilon=\mF_\varepsilon\oplus\mF_{\varepsilon,1}^\perp\oplus\mF_{\varepsilon,2}^\perp$ is lifted from $T\mM=\mF\oplus\mF_1^\perp\oplus\mF_2^\perp$ via $\pi^*_\varepsilon$, where $\mM$ is the Connes fibration over $M$ as in \cite[\S 2.1]{Z17}.}

  The metric $g^{F_\varepsilon^\perp}$   in (\ref{0.5a}) determines a canonical embedded section $s: M_\varepsilon\hookrightarrow \mM_\varepsilon$.
For any $p\in\mM_\varepsilon$, set $\rho(p)=d^{\mM_{\varepsilon,\widetilde\pi_\varepsilon(p)}}(p,s(\widetilde\pi_\varepsilon(p) ))$.

For any $  \beta,\ \gamma>0$,  following  \cite[(2.15)]{Z17}, let $g_{\beta,\gamma}^{T\mM_\varepsilon}$ be the   metric    on $T\mM_\varepsilon$  defined by
the
  orthogonal splitting,
\begin{align}\label{0.6}\begin{split}
       T\mM_\varepsilon =   \mF_\varepsilon\oplus \mF^\perp_{\varepsilon,1}\oplus \mF^\perp_{\varepsilon,2},  \ 
\  \  \
g^{T\mM_\varepsilon}_{\beta,\gamma}= \beta^2   g^{\mF_\varepsilon}\oplus\frac{
g^{\mF^\perp_{\varepsilon,1}}}{ \gamma^2 }\oplus g^{\mF^\perp_{\varepsilon,2}}.\end{split}
\end{align}

For any $R>0$, let $ \mM_{\varepsilon,R}$ be the smooth manifold with boundary
defined by
\begin{align}\label{0.7} 
\mM_{\varepsilon,R}=\left\{ p\in \mM_\varepsilon\ :\  \rho(p)\leq R \right\}.
\end{align}

Set $\mH_\varepsilon = (\widetilde\pi_\varepsilon)^{-1}(H_\varepsilon)$ and 
\begin{align}\label{0.8} 
\mM_{\mH_\varepsilon,R} = \left(\left(\widetilde\pi_\varepsilon\right)^{-1}
\left(M_{H_\varepsilon}\right)\right)\cap \mM_{\varepsilon,R}.
\end{align}

Consider another copy $\mM_{\mH_\varepsilon,R} '$ of $\mM_{\mH_\varepsilon,R} $. We glue $\mM_{\mH_\varepsilon,R} $ and $\mM_{\mH_\varepsilon,R} '$ along 
$\mH_\varepsilon \cap \mM_{\varepsilon,R}$ to get the double, denoted by $\widehat\mM_{\mH_\varepsilon, R}$, which is a smooth manifold with boundary. Moreover, $\widehat\mM_{\mH_\varepsilon, R}$ is a disk bundle over $\widehat M_{H_\varepsilon}$.  Without loss of generality, we assume that $\widehat\mM_{\mH_\varepsilon, R}$ is oriented. 
Let $g_{\beta,\gamma}^{T\widehat \mM_{\mH_\varepsilon,R}}$ be a metric on $T\widehat \mM_{\mH_\varepsilon,R}$ such that $g_{\beta,\gamma}^{T\widehat \mM_{\mH_\varepsilon,R}}|_{\mM_{\mH_\varepsilon,R}}=g_{\beta,\gamma}^{T\mM_\varepsilon}|_{\mM_{\mH_\varepsilon,R}}$. The existence of $g_{\beta,\gamma}^{T\widehat \mM_{\mH_\varepsilon,R}}$ is clear.\footnote{Here we need not assume that  $g_{\beta,\gamma}^{T\widehat \mM_{\mH_\varepsilon,R}}$ is of product structure near ${\mH_\varepsilon}$.} 

Let $\partial \widehat \mM_{\mH_\varepsilon,R}$ bound another oriented manifold $  \mN_{\varepsilon,R}$ so that $\widetilde  \mN_{\varepsilon,R}=\widehat \mM_{\mH_\varepsilon,R}\cup \mN_{\varepsilon,R}$ is an oriented closed manifold. 
Let $g_{\beta,\gamma}^{T\widetilde \mN_{\varepsilon,R}}$ be a smooth metric on $T\widetilde \mN_{\varepsilon,R}$ so that
$g_{\beta,\gamma}^{T\widetilde \mN_{\varepsilon,R}}
|_{\widehat\mM_{\mH_\varepsilon,R}}=g_{\beta,\gamma}^{T\widehat\mM_{\mH_\varepsilon,R} }$. The existence of $g_{\beta,\gamma}^{T\widetilde \mN_{\varepsilon,R}}$ is clear.

We extend $f_\varepsilon:M_{H_\varepsilon}\rightarrow S^{\dim M}(1)$ to $f_\varepsilon:\widehat M_{H_\varepsilon}\rightarrow S^{\dim M}(1)$ by setting $f_\varepsilon(M_{H_\varepsilon}')=x_0$. Let $\widehat f_{\varepsilon,R}:\widehat \mM_{\mH_\varepsilon,R}\rightarrow S^{\dim M}(1)$ be the smooth map defined by
\begin{align}\label{0.9} 
\widehat f_{\varepsilon,R} = f_\varepsilon\circ \widetilde \pi_\varepsilon \ \ {\rm on} \ \ \mM_{\mH_\varepsilon,R}
\end{align}
and $\widehat f_{\varepsilon,R} (\mM_{\mH_\varepsilon,R}')=x_0$. 

For $i=0,\,1$, let $(\mE_{\varepsilon,R,i},g^{\mE_{\varepsilon,R,i}},\nabla^{\mE_{\varepsilon,R,i}})= \widehat f_{\varepsilon,R}^* (E_i,g^{E_i},\nabla^{E_i})$ be the induced Hermitian vector bundle with Hermitian connection on $\widehat \mM_{\mH_\varepsilon,R}$. Then $\mE_{\varepsilon,R}=\mE_{\varepsilon,R,0}\oplus \mE_{\varepsilon,R,1}$ is a ${\bf Z}_2$-graded Hermitian vector bundle over $\widehat \mM_{\mH_\varepsilon,R}$.

\subsection{Adiabatic limits and deformed sub-Dirac operators on
$\widehat \mM_{\mH_\varepsilon,R}$}\label{s1.3}

We assume first that $TM$ is oriented and spin. Then $TM_\varepsilon=\pi_\varepsilon^*(TM)$ is spin, and thus $\mF_\varepsilon\oplus \mF_{\varepsilon,1}^\perp=\widetilde\pi^*_\varepsilon(TM_\varepsilon)$ is spin.  Without loss of generality, we assume   $F_\varepsilon$ is oriented. Then   $F^\perp_\varepsilon$  is also oriented.  Without loss of generality, we assume that     ${\rm rk}(F^\perp)$ is divisible by $4$. Then $\dim \mM_\varepsilon$ is even. 

It is clear that $\mF_\varepsilon\oplus \mF^\perp_{\varepsilon,1},\, \mF^\perp_{\varepsilon,2}$  over $\mM_{\mH_\varepsilon,R}$ can be extended to $\mM_{\mH_\varepsilon,R}'$ such that we have the orthogonal splitting
\begin{align}\label{0.10}
T\widehat \mM_{\mH_\varepsilon,R}= \left(\mF_\varepsilon\oplus\mF^\perp_{\varepsilon,1} \right)\oplus \mF^\perp_{\varepsilon,2}\ \  {\rm on}\ \ \widehat \mM_{\mH_\varepsilon,R}.
\end{align}

Let $S_{\beta,\gamma}(\mF_\varepsilon\oplus \mF^\perp_{\varepsilon,1})$ denote the spinor bundle over $\widehat \mM_{\mH_\varepsilon,R}$ with respect to the metric  $g^{T\widehat\mM_{\mH_\varepsilon,R}}_{\beta,\gamma}|_{\mF_\varepsilon\oplus\mF^\perp_{\varepsilon,1}} $ (thus with respect to
$\beta^2g^{\mF_\varepsilon}\oplus \frac{g^{\mF^\perp_{\varepsilon,1}}}{\gamma^2}$ on $\mM_{\mH_\varepsilon,R}$). Let $\Lambda^* (\mF_{\varepsilon,2}^\perp )$ denote the exterior algebra bundle of $\mF_{\varepsilon,2}^{\perp,*}$, with the   ${\bf Z}_2$-grading given by the natural parity (cf. \cite[(1.15)]{Z19}).

Let $D _{\mF_\varepsilon\oplus\mF_{\varepsilon,1}^\perp,\beta,\gamma}:\Gamma (S_{\beta,\gamma} (\mF_\varepsilon\oplus\mF_{\varepsilon,1}^\perp)\widehat\otimes
\Lambda^* (\mF_{\varepsilon,2}^\perp ) )
\rightarrow
\Gamma (S_{\beta,\gamma} (\mF_\varepsilon\oplus\mF_{\varepsilon,1}^\perp)\widehat\otimes
\Lambda^* (\mF_{\varepsilon,2}^\perp ) )$ be the sub-Dirac operator on $\widehat \mM_{\mH_\varepsilon,R}$ constructed as in \cite[(2.16)]{Z17}. Then it is clear that one can define canonically the twisted sub-Dirac operator (twisted by $\mE_{\varepsilon,R}$) on $\widehat \mM_{\mH_\varepsilon,R}$,
\begin{multline}\label{0.11}
D ^{\mE_{\varepsilon,R}}_{\mF_\varepsilon\oplus\mF_{\varepsilon,1}^\perp,\beta,\gamma}:\Gamma  \left(S_{\beta,\gamma}  \left(\mF_\varepsilon\oplus\mF_{\varepsilon,1}^\perp\right)\widehat\otimes
\Lambda^*\left (\mF_{\varepsilon,2}^\perp \right)\widehat \otimes \mE_{\varepsilon,R}\right )
\\
\longrightarrow
\Gamma  \left(S_{\beta,\gamma}  \left(\mF_\varepsilon\oplus\mF_{\varepsilon,1}^\perp\right)\widehat\otimes
\Lambda^*\left (\mF_{\varepsilon,2}^\perp \right)\widehat \otimes \mE_{\varepsilon,R}\right ).
\end{multline}
Moreover, 
by  \cite[(2.28)]{Z17}, one sees that   the following indentity holds on $\mM_{\mH_\varepsilon,R}$, in using the same notation for Clifford actions as in \cite{Z17},  
\begin{multline}\label{0.12} 
 \left(D ^{\mE_{\varepsilon,R}}_{\mF_\varepsilon\oplus\mF_{\varepsilon,1}^\perp,\beta,\gamma}\right)^2=-\Delta^{\mE,\varepsilon,\beta,\gamma}+\frac{k^{\mF_\varepsilon}}{4\beta^2}+\frac{1}{2\beta^2}\sum_{i,\,j=1}^{{\rm rk}(F)}R^{\mE_{\varepsilon,R}}(f_i,f_j)c_{\beta,\gamma}\left(\beta^{-1}f_i\right)c_{\beta,\gamma}\left(\beta^{-1}f_j\right)
\\
+
O_{\varepsilon,R}\left(\frac{1}{\beta}+\frac{\gamma^2}{\beta^2}\right)
,
\end{multline}
where $-\Delta^{\mE,\varepsilon,\beta,\gamma}\geq 0$ is the corresponding Bochner Laplacian,  
\begin{align}\label{0.12a}
 k^{\mF_\varepsilon}=\widetilde \pi_\varepsilon^*\left(\pi^*_\varepsilon \left(k^F\right)\right)\geq \delta,
\end{align}
 $R^{\mE_{\varepsilon,R}}
=(\nabla^{\mE_{\varepsilon,R,0}})^2+(\nabla^{\mE_{\varepsilon,R,1}})^2$ and $f_1,\,\cdots,\,f_{{\rm rk}(F)}$ is an orthonormal basis of $(\mF_\varepsilon,g^{\mF_\varepsilon})$.  The subscripts in $O_{\varepsilon,R}(\cdot)$ mean that the estimating constant may depend on $\varepsilon$ and $R$. 

On the other hand, since $g^{\mF_\varepsilon}=\widetilde \pi_\varepsilon^*g^{F_\varepsilon}$, one has via (\ref{0.1}) and (\ref{0.9}) that
\begin{align}\label{0.12a}
 R^{\mE_{\varepsilon,R}}(f_i,f_j)=\sum_{k=1}^2\widehat f_{\varepsilon,R}^*\left( \left(\nabla^{E_k}\right)^2
\left(\widehat f_{\varepsilon,R\,*}( f_i), \widehat f_{\varepsilon,R\,*} (f_j)\right)\right)
 =O\left(\varepsilon^2\right),
\end{align}
where the estimating constant does not depend on $\varepsilon$ and $R$. 

Let $f:[0,1]\rightarrow [0,1]$ be a smooth function such that  $f(t)= 0$ for $0\leq t\leq \frac{1}{4}$, while $f(t) =1$ for $   \frac{1}{2}\leq t\leq 1$.  Let $h:[0,1]\rightarrow [0,1]$ be a smooth function such that $h(t)=1$ for $0\leq t\leq \frac{3}{4}$, while $h(t)=0$ for $\frac{7}{8}\leq t\leq 1$. 

For any $p\in \mM_{\mH_\varepsilon,R}$, we connect $p$ and $s(\widetilde\pi_\varepsilon(p))$ by the unique geodesic in $\mM_{\varepsilon, \widetilde\pi_\varepsilon(p)}$. Let $\sigma(p)\in \mF_{\varepsilon,2}^\perp|_p$ denote the unite vector tangent to this geodesic. Then  
\begin{align}\label{0.13}
 \widetilde \sigma = f\left(\frac{\rho}{R}\right)\sigma
\end{align}
is a smooth section of $\mF_{\varepsilon,2}^\perp|_{\mM_{\mH_\varepsilon,R}}$. It extends to a smooth section of $\mF_{\varepsilon,2}^\perp|_{\widehat\mM_{\mH_\varepsilon,R}}$, which we still denote by $\widetilde\sigma$.  It is easy to see that we may and we will assume that $\widetilde\sigma$ is transversal to (and thus no where zero on) $\partial \widehat\mM_{\mH_\varepsilon,R}$.

The Clifford action $\widehat c(\widetilde\sigma)$ (cf. \cite[(1.47)]{Z17}) now acts on $S_{\beta,\gamma}   (\mF_\varepsilon\oplus\mF_{\varepsilon,1}^\perp )\widehat\otimes
\Lambda^* (\mF_{\varepsilon,2}^\perp  )\widehat \otimes \mE_{\varepsilon,R} $ over $\widehat\mM_{\mH_\varepsilon,R}$. 

We also set
\begin{align}\label{0.14}
 W_{\widehat f_{\varepsilon,R}}=\widehat f_{\varepsilon,R} ^*(W),
\end{align}
where $W$ is defined in (\ref{0.5}). Then $W_{\widehat f_{\varepsilon,R}}$ is an odd endomorphism of $\mE_{\varepsilon,R}$ and thus also acts on $S_{\beta,\gamma}   (\mF_\varepsilon\oplus\mF_{\varepsilon,1}^\perp )\widehat\otimes
\Lambda^* (\mF_{\varepsilon,2}^\perp  )\widehat \otimes \mE_{\varepsilon,R} $ in an obvious way.

Inspired by \cite[(2.21)]{Z17} and \cite[(1.11)]{Z19}, we introduce the following deformation of  $D ^{\mE_{\varepsilon,R}}_{\mF_\varepsilon\oplus\mF_{\varepsilon,1}^\perp,\beta,\gamma}$ on 
$\widehat\mM_{\mH_\varepsilon,R}$\ :
\begin{align}\label{0.15}
 D ^{\mE_{\varepsilon,R}}_{\mF_\varepsilon\oplus\mF_{\varepsilon,1}^\perp,\beta,\gamma}
+\frac{\widehat c(\widetilde\sigma)}{\beta}
+\frac{W_{\widehat f_{\varepsilon,R}}}{\beta}.
\end{align}

For this deformed sub-Dirac operator, we have the following analogue of
\cite[Lemma 2.4]{Z17}.

\begin{lemma}\label{t0.4} There exist $c_0>0$, $\varepsilon>0$ and $R>0$ such that   when $\beta,\,\gamma>0$ (which may depend on $\varepsilon$ and $R$) are small enough, 

(i) for any $s\in \Gamma  (S_{\beta,\gamma}   (\mF_\varepsilon\oplus\mF_{\varepsilon,1}^\perp )\widehat\otimes
\Lambda^* (\mF_{\varepsilon,2}^\perp )\widehat \otimes \mE_{\varepsilon,R}  )$ supported in the interior of $\widehat\mM_{\mH_\varepsilon,R}$, one has\footnote{The norms below   depend on $\beta$ and $\gamma$. In case of no confusion, we omit the subscripts for simplicity.}
\begin{align}\label{0.16}
 \left\|\left(D ^{\mE_{\varepsilon,R}}_{\mF_\varepsilon\oplus\mF_{\varepsilon,1}^\perp,\beta,\gamma}
+\frac{\widehat c(\widetilde\sigma)}{\beta}
+\frac{W_{\widehat f_{\varepsilon,R}}}{\beta}\right)s\right\|\geq \frac{c_0 }{\beta}\|s\|;
\end{align}

(ii) for any $s\in \Gamma  (S_{\beta,\gamma}   (\mF_\varepsilon\oplus\mF_{\varepsilon,1}^\perp )\widehat\otimes
\Lambda^* (\mF_{\varepsilon,2}^\perp )\widehat \otimes \mE_{\varepsilon,R}  )$
supported in the interior of $ \mM_{\mH_\varepsilon,R}\setminus  \mM_{\mH_\varepsilon,\frac{R}{2}}$, one has
\begin{align}\label{0.17}
 \left\|\left(h\left(\frac{\rho}{R}\right) D ^{\mE_{\varepsilon,R}}_{\mF_\varepsilon\oplus\mF_{\varepsilon,1}^\perp,\beta,\gamma}
 h\left(\frac{\rho}{R}\right) 
+\frac{\widehat c\left(\widetilde\sigma\right)}{\beta}
+\frac{W_{\widehat f_{\varepsilon,R}}}{\beta}\right)s\right\|\geq \frac{c_0 }{\beta}\|s\|.
\end{align}
\end{lemma}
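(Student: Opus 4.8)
The plan is to establish both inequalities by squaring the operator in (\ref{0.15}) and extracting from it a positive term of size $\beta^{-2}$. Write $D$ for $D^{\mE_{\varepsilon,R}}_{\mF_\varepsilon\oplus\mF_{\varepsilon,1}^\perp,\beta,\gamma}$, $\widehat c$ for $\widehat c(\widetilde\sigma)$ and $W$ for $W_{\widehat f_{\varepsilon,R}}$. Then
\begin{align*}
\left(D+\frac{\widehat c}{\beta}+\frac{W}{\beta}\right)^{2}
=D^{2}+\frac1\beta\bigl(D\widehat c+\widehat c\,D\bigr)+\frac1\beta\bigl(DW+WD\bigr)+\frac1{\beta^{2}}\bigl(\widehat c^{\,2}+W^{2}+\widehat c\,W+W\widehat c\bigr).
\end{align*}
Here $\widehat c^{\,2}=|\widetilde\sigma|^{2}\geq 0$ (cf.\ \cite[(1.47)]{Z17}), $W^{2}\geq 0$ and is positive definite wherever $w$ is invertible, while $\widehat c\,W+W\widehat c=0$ since $\widehat c$ and $W$ are odd endomorphisms acting on the two distinct factors of the ${\bf Z}_{2}$-graded tensor product $\Lambda^{*}(\mF_{\varepsilon,2}^{\perp})\widehat\otimes\mE_{\varepsilon,R}$. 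For $D^{2}$ we use the Lichnerowicz-type identity (\ref{0.12}) over $\mM_{\mH_\varepsilon,R}$, and its analogue with vanishing $\mE_{\varepsilon,R}$-curvature term over the rest of $\widehat\mM_{\mH_\varepsilon,R}$, where $\widehat f_{\varepsilon,R}\equiv x_{0}$ and hence $\mE_{\varepsilon,R}$ is flat. Since the Bochner Laplacian pairs nonnegatively with sections $s$ supported in the interior, we get $\|(D+\beta^{-1}\widehat c+\beta^{-1}W)s\|^{2}\geq\beta^{-2}\langle(\tfrac14 k^{\mF_\varepsilon}+|\widetilde\sigma|^{2}+W^{2})s,s\rangle$ minus an error that collects the $\mE_{\varepsilon,R}$-curvature term of (\ref{0.12}), the anticommutators $\beta^{-1}(D\widehat c+\widehat c\,D)$ and $\beta^{-1}(DW+WD)$, and the remainder $O_{\varepsilon,R}(\beta^{-1}+\gamma^{2}\beta^{-2})$.

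The crucial point is that $\tfrac14 k^{\mF_\varepsilon}+|\widetilde\sigma|^{2}+W^{2}$ has a uniform positive lower bound on $\widehat\mM_{\mH_\varepsilon,R}$, once the auxiliary geometry is chosen well. On the ``core'' $(\widetilde\pi_\varepsilon)^{-1}(K_\varepsilon)\cap\mM_{\mH_\varepsilon,R}$ this is immediate: the lift of (\ref{0.2}) gives $k^{\mF_\varepsilon}\geq\delta$ there, and $|\widetilde\sigma|^{2},W^{2}\geq 0$. On the complement — which contains $\mM_{\mH_\varepsilon,R}'$ as well as the part of $\mM_{\mH_\varepsilon,R}$ lying over $M_{H_\varepsilon}\setminus K_\varepsilon$ — one has $\widehat f_{\varepsilon,R}\equiv x_{0}$, so $\mE_{\varepsilon,R}$ is flat and $W=\widehat f_{\varepsilon,R}^{*}(W)$ is the parallel, invertible endomorphism $W|_{x_{0}}$; hence $W^{2}\geq\lambda_{0}>0$ there, $\lambda_{0}$ being the smallest eigenvalue of $W|_{x_{0}}^{2}$. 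One then chooses the extension $g^{T\widehat\mM_{\mH_\varepsilon,R}}_{\beta,\gamma}$ of $g^{T\mM_\varepsilon}_{\beta,\gamma}$ (and, later, that of $g^{T\widetilde\mN_{\varepsilon,R}}_{\beta,\gamma}$) so that its leafwise scalar curvature stays $\geq-2\lambda_{0}$ on this complement; this is possible because, away from the fixed core, one is free to deform the metric — e.g.\ to enlarge the leaves, driving their scalar curvature to $0$, interpolating over a sufficiently long collar of $\mH_\varepsilon$ — subject only to a smooth match along $\mH_\varepsilon$, where already $\widehat f_{\varepsilon,R}\equiv x_{0}$. With this choice, $\tfrac14 k^{\mF_\varepsilon}+|\widetilde\sigma|^{2}+W^{2}\geq 2c_{1}$ on all of $\widehat\mM_{\mH_\varepsilon,R}$ for some $c_{1}>0$ depending only on $\delta$ and $W$.

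It remains to see that the errors are negligible compared with $c_{1}\beta^{-2}$. Since $\widehat f_{\varepsilon,R}=f_\varepsilon\circ\widetilde\pi_\varepsilon$ and $g^{\mF_\varepsilon}=\widetilde\pi_\varepsilon^{*}g^{F_\varepsilon}$, the contraction bound (\ref{0.1}) gives $R^{\mE_{\varepsilon,R}}(f_{i},f_{j})=O(\varepsilon^{2})$ with a constant independent of $\varepsilon$ and $R$ (as in the computation just after (\ref{0.12})), so the curvature term is $O(\varepsilon^{2}\beta^{-2})$ and vanishes outside the core. The same factorisation controls $DW+WD$: $\widehat f_{\varepsilon,R}$ is constant along the fibres of $\widetilde\pi_\varepsilon$, so $W$ is parallel in the $\mF_{\varepsilon,2}^{\perp}$-directions (in particular $DW+WD=0$ over the complement), while in the $\mF_\varepsilon$- and $\mF_{\varepsilon,1}^{\perp}$-directions the covariant derivative of $W$ (with respect to $\nabla^{E_{0}}\oplus\nabla^{E_{1}}$) is the $\widehat f_{\varepsilon,R}$-pullback of that of $W$ on $S^{\dim M}(1)$ evaluated on $\widehat f_{\varepsilon,R\,*}(\,\cdot\,)$, hence $O(\varepsilon)$ by (\ref{0.1}); tracking the $\beta^{-1}$- and $\gamma$-normalisations of the Clifford variables in (\ref{0.6}) gives $\beta^{-1}(DW+WD)=O(\varepsilon\beta^{-2})+O(\varepsilon\gamma\beta^{-1})$. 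Finally, exactly as in \cite[Lemma 2.4]{Z17}, $D\widehat c+\widehat c\,D$ is a bounded operator plus a first-order operator differentiating only $\widetilde\sigma$; as $\widetilde\sigma$ and its derivatives are bounded on the compact $\widehat\mM_{\mH_\varepsilon,R}$ and the $\mF_{\varepsilon,2}^{\perp}$-directions are unscaled, $\beta^{-1}(D\widehat c+\widehat c\,D)=O_{\varepsilon,R}(\beta^{-1})$, of lower order than $\beta^{-2}$. Now fix $\varepsilon>0$ so small that the $\varepsilon$-errors are at most $\tfrac12 c_{1}\beta^{-2}$, then fix $R>0$ and the metric extensions as above, and finally take $\beta,\gamma>0$ small with $\gamma/\beta$ small so that $O_{\varepsilon,R}(\beta^{-1}+\gamma^{2}\beta^{-2})\leq\tfrac12 c_{1}\beta^{-2}$; this yields (\ref{0.16}) with $c_{0}=\sqrt{c_{1}}$. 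For (\ref{0.17}) one repeats the expansion with $D$ replaced by the self-adjoint operator $h(\rho/R)\,D\,h(\rho/R)$, whose square is nonnegative; the only extra terms are the bounded commutators $[D,h(\rho/R)]$, again contributing $O_{\varepsilon,R}(\beta^{-1})$. Since $s$ is supported where $\rho\in(R/2,R]$, there $f(\rho/R)=1$, so $\widehat c^{\,2}+W^{2}\geq|\widetilde\sigma|^{2}=1$ on $\supp s$; this $\beta^{-2}$-term now carries the estimate regardless of the sign of $k^{\mF_\varepsilon}$, and the same choice of parameters gives (\ref{0.17}).

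I expect the genuine obstacle to be the second paragraph: guaranteeing that the fixed positive number $\lambda_{0}$ really dominates the leafwise scalar curvature throughout the region where $\widehat f_{\varepsilon,R}\equiv x_{0}$, and that this survives the gluings producing $\widehat\mM_{\mH_\varepsilon,R}$ and $\widetilde\mN_{\varepsilon,R}$. This fails for a careless extension of the metric and must be engineered deliberately — enlarging the leaves outside the core and interpolating slowly — which is legitimate precisely because $\widehat f_{\varepsilon,R}$ is already constant there, so no degree is lost. The remaining, purely computational, difficulty is the bookkeeping of the $\beta$- and $\gamma$-powers of the cross terms under the adiabatic rescaling (\ref{0.6}); it is there that the factorisation of $\widehat f_{\varepsilon,R}$ through $\widetilde\pi_\varepsilon$ and the enlargeability bound (\ref{0.1}) are used decisively.
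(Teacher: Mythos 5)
Your overall strategy---square the deformed operator globally on $\widehat\mM_{\mH_\varepsilon,R}$ and extract a uniform $\beta^{-2}$-positive zeroth order term---differs from the paper's, and the difficulty you yourself flag in your second paragraph is a genuine gap that your proposed fix does not close. You need $\tfrac14 k^{\mF_\varepsilon}+|\widetilde\sigma|^2+W_{\widehat f_{\varepsilon,R}}^2$ to be uniformly positive on \emph{all} of $\widehat\mM_{\mH_\varepsilon,R}$, and on the primed copy you propose to engineer the extended metric so that its leafwise scalar curvature is $\geq -2\lambda_0$. But the Lichnerowicz-type identity (\ref{0.12}) is only asserted on $\mM_{\mH_\varepsilon,R}$, where $\mF_\varepsilon$ is lifted from an integrable bundle and the metric is the lifted one; on $\mM_{\mH_\varepsilon,R}'$ the extended $\mF_\varepsilon$ is merely a subbundle with an arbitrary extended metric, so the formula you invoke there is not available without further work, and producing an extension matching the given germ along $\mH_\varepsilon\cap\mM_{\varepsilon,R}$ with a prescribed lower bound on the leafwise curvature is an additional construction you do not carry out. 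The paper circumvents all of this with a Bismut--Lebeau cut-off: functions $\varphi_{\varepsilon,R,1},\varphi_{\varepsilon,R,2}$ with $\varphi_{\varepsilon,R,1}^2+\varphi_{\varepsilon,R,2}^2=1$, where $\varphi_{\varepsilon,R,1}$ is supported where $\widehat f_{\varepsilon,R}$ avoids a neighborhood of $x_0$ (hence over $K_\varepsilon$, where (\ref{0.12}) and $k^{\mF_\varepsilon}\geq\delta$ are in force, as in (\ref{1.26b})), while on $\supp\varphi_{\varepsilon,R,2}$ one keeps only $W^2\geq\delta_1$ from (\ref{0.18})--(\ref{1.26c}) and simply discards the nonnegative term $\|(D^{\mE_{\varepsilon,R}}_{\mF_\varepsilon\oplus\mF_{\varepsilon,1}^\perp,\beta,\gamma}+\widehat c(\widetilde\sigma)/\beta)(\varphi_{\varepsilon,R,2}s)\|^2$. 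No curvature control on the doubled region is ever needed; the price is only the commutator errors $c_{\beta,\gamma}(d\varphi_{\varepsilon,R,j})$, which (\ref{1.26}) shows are harmless.

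There is also a concrete error in your bookkeeping of the term $\beta^{-1}\bigl(D\widehat c(\widetilde\sigma)+\widehat c(\widetilde\sigma)D\bigr)$, which you declare to be $O_{\varepsilon,R}(\beta^{-1})$ and hence of lower order than $\beta^{-2}$. The leafwise derivatives of $\sigma$ enter with the adiabatic weight $\beta^{-1}$, so in fact (cf.\ (\ref{0.25b}) and \cite[(2.27)]{Z17}) one only has $[D^{\mE_{\varepsilon,R}}_{\mF_\varepsilon\oplus\mF_{\varepsilon,1}^\perp,\beta,\gamma},\widehat c(\sigma)]=O_\varepsilon(\tfrac{1}{\beta R})+O_{\varepsilon,R}(1)$, and the anticommutator contributes a term of size $O_\varepsilon(\tfrac{1}{\beta^2 R})$---the \emph{same} order in $\beta$ as your main term, suppressed only by $1/R$. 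This is precisely why $R$ must be taken large in the lemma; as written, your error estimate would permit an arbitrary $R$, which is false, and in part (ii) this term competes directly with the $|\widetilde\sigma|^2/\beta^2=1/\beta^2$ term that carries your whole estimate there. Your treatment of the remaining pieces ($\widehat cW+W\widehat c=0$ by graded anticommutation, $R^{\mE_{\varepsilon,R}}(f_i,f_j)=O(\varepsilon^2)$ and $[\nabla^{\mE_{\varepsilon,R}}_{f_i},W_{\widehat f_{\varepsilon,R}}]=O(\varepsilon)$ via (\ref{0.1}), vanishing of the fiberwise derivatives of $W_{\widehat f_{\varepsilon,R}}$) is consistent with (\ref{0.23})--(\ref{1.25}).
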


\begin{proof} Recall that $x_0\in S^{\dim M}(1)$ is fixed and $W|_{x_0}$ is invertible. Let $U_{x_0}\subset S^{\dim M}(1)$ be a (fixed) sufficiently small open neighborhood  of $x_0$ such that the following inequality holds on $U_{x_0}$, where $\delta_1>0$ is a fixed constant,
\begin{align}\label{0.18}
 W^2\geq \delta_1.
\end{align}

Following \cite{Z19}, let $\psi:S^{\dim M}(1)\rightarrow [0,1]$ be a smooth function such that $\psi=1$ near $x_0$ and ${\rm Supp}(\psi)\subset U_{x_0}$. Then 
\begin{align}\label{0.19}
 \varphi_{\varepsilon,R} =1 - \widehat f_{\varepsilon,R}^*\psi 
\end{align}
is a smooth function on $\widehat\mM_{\mH_\varepsilon,R}$ such that $\varphi_{\varepsilon,R}=0 $ on $\mM_{\mH_\varepsilon,R}'$.

Following \cite[p.115]{BL91}, let $\varphi_{\varepsilon,R,1},\, \varphi_{\varepsilon,R,2}: \widehat\mM_{\mH_\varepsilon,R}\rightarrow [0,1]$ be defined by
 \begin{align}\label{0.20}
 \varphi_{\varepsilon,R,1} =\frac{\varphi_{\varepsilon,R}}{\left(\varphi_{\varepsilon,R}^2+\left(1-\varphi_{\varepsilon,R}\right)^2\right)^{\frac{1}{2}}},\ \ \ \varphi_{\varepsilon,R,2} =\frac{1-\varphi_{\varepsilon,R}}{\left(\varphi_{\varepsilon,R}^2+\left(1-\varphi_{\varepsilon,R}\right)^2\right)^{\frac{1}{2}}}.
\end{align}
Then $ \varphi_{\varepsilon,R,1}^2+ \varphi_{\varepsilon,R,2}^2=1$. Thus, for any
 $s\in \Gamma  (S_{\beta,\gamma}   (\mF_\varepsilon\oplus\mF_{\varepsilon,1}^\perp )\widehat\otimes
\Lambda^* (\mF_{\varepsilon,2}^\perp )\widehat \otimes \mE_{\varepsilon,R}  )$ supported in the interior of $\widehat\mM_{\mH_\varepsilon,R}$, one has
\begin{multline}\label{0.21}
 \left\|\left(D ^{\mE_{\varepsilon,R}}_{\mF_\varepsilon\oplus\mF_{\varepsilon,1}^\perp,\beta,\gamma}+\frac{\widehat c(\widetilde\sigma)}{\beta}
+\frac{W_{\widehat f_{\varepsilon,R}}}{\beta}\right)s\right\|^2 \\
=\left\|\varphi_{\varepsilon,R,1}\left(D ^{\mE_{\varepsilon,R}}_{\mF_\varepsilon\oplus\mF_{\varepsilon,1}^\perp,\beta,\gamma}+\frac{\widehat c(\widetilde\sigma)}{\beta}
+\frac{W_{\widehat f_{\varepsilon,R}}}{\beta}\right)s\right\|^2 
+\left\| \varphi_{\varepsilon,R,2}\left(D ^{\mE_{\varepsilon,R}}_{\mF_\varepsilon\oplus\mF_{\varepsilon,1}^\perp,\beta,\gamma}+\frac{\widehat c(\widetilde\sigma)}{\beta}
+\frac{W_{\widehat f_{\varepsilon,R}}}{\beta}\right)s\right\|^2 ,
\end{multline}
from which one gets, 
\begin{multline}\label{0.22}
 \sqrt{2}\left\|\left(D ^{\mE_{\varepsilon,R}}_{\mF_\varepsilon\oplus\mF_{\varepsilon,1}^\perp,\beta,\gamma}+\frac{\widehat c(\widetilde\sigma)}{\beta}
+\frac{W_{\widehat f_{\varepsilon,R}}}{\beta}\right)s\right\|
\\
\geq
\left\|\varphi_{\varepsilon,R,1}\left(D ^{\mE_{\varepsilon,R}}_{\mF_\varepsilon\oplus\mF_{\varepsilon,1}^\perp,\beta,\gamma}+\frac{\widehat c(\widetilde\sigma)}{\beta}
+\frac{W_{\widehat f_{\varepsilon,R}}}{\beta}\right)s\right\|
+\left\| \varphi_{\varepsilon,R,2}\left(D ^{\mE_{\varepsilon,R}}_{\mF_\varepsilon\oplus\mF_{\varepsilon,1}^\perp,\beta,\gamma}+\frac{\widehat c(\widetilde\sigma)}{\beta}
+\frac{W_{\widehat f_{\varepsilon,R}}}{\beta}\right)s\right\|
\\
\geq \left\|\left(D ^{\mE_{\varepsilon,R}}_{\mF_\varepsilon\oplus\mF_{\varepsilon,1}^\perp,\beta,\gamma}+\frac{\widehat c(\widetilde\sigma)}{\beta}
+\frac{W_{\widehat f_{\varepsilon,R}}}{\beta}\right)\left(\varphi_{\varepsilon,R,1}s\right)\right\|
\\
+\left\| \left(D ^{\mE_{\varepsilon,R}}_{\mF_\varepsilon\oplus\mF_{\varepsilon,1}^\perp,\beta,\gamma}+\frac{\widehat c(\widetilde\sigma)}{\beta}
+\frac{W_{\widehat f_{\varepsilon,R}}}{\beta}\right)\left(\varphi_{\varepsilon,R,2}s\right)\right\| 
-\left\|c_{\beta,\gamma}\left(d\varphi_{\varepsilon,R,1}\right)s\right\|
-\left\|c_{\beta,\gamma}\left(d\varphi_{\varepsilon,R,2}\right)s\right\|,
\end{multline}
where we identify a one form with its gradient.

Let $f_1, \dots, f_q$ (resp. $h_1,\dots,h_{q_1}$; resp. $e_1,\dots, e_{q_2}$) be an orthonormal basis of 
$(\mF_\varepsilon,g^{\mF_\varepsilon})$ (resp. $(\mF_{\varepsilon,1}^\perp, g^{\mF_{\varepsilon,1}^\perp})$; resp. $(\mF_{\varepsilon,2}^\perp, g^{\mF_{\varepsilon,2}^\perp})$). Then by \cite[(2.17)]{Z17} 
one has  
\begin{multline}\label{0.23}
\left(D ^{\mE_{\varepsilon,R}}_{\mF_\varepsilon\oplus\mF_{\varepsilon,1}^\perp,\beta,\gamma}+\frac{\widehat c(\widetilde\sigma)}{\beta}
+\frac{W_{\widehat f_{\varepsilon,R}}}{\beta}\right)^2=\left(D ^{\mE_{\varepsilon,R}}_{\mF_\varepsilon\oplus\mF_{\varepsilon,1}^\perp,\beta,\gamma}+\frac{\widehat c(\widetilde\sigma)}{\beta}\right)^2
\\
+\sum_{i=1}^q \beta^{-1}c_{\beta,\gamma}\left(\beta^{-1}f_i\right)\left[\nabla^{\mE_{\varepsilon,R}}_{f_i}, \frac{W_{\widehat f_{\varepsilon,R}}}{\beta}\right]+\sum_{s=1}^{q_1}\gamma\, c_{\beta,\gamma}(\gamma h_s)\left[\nabla^{\mE_{\varepsilon,R}}_{h_s},\frac{W_{\widehat f_{\varepsilon,R}}}{\beta}\right]
\\
+\sum_{j=1}^{q_2}c(e_j)\left[\nabla^{\mE_{\varepsilon,R}}_{e_j},\frac{W_{\widehat f_{\varepsilon,R}}}{\beta}\right]
+\frac{W^2_{\widehat f_{\varepsilon,R}}}{\beta^2}.
\end{multline}

\comment{

From (\ref{0.23}), one has for $j=1,2$ that
\begin{multline}\label{0.24}
\left\|\left(D ^{\mE_{\varepsilon,R}}_{\mF_\varepsilon\oplus\mF_{\varepsilon,1}^\perp,\beta,\gamma}+\frac{\widehat c(\widetilde\sigma)}{\beta}
+\frac{W_{\widehat f_{\varepsilon,R}}}{\beta}\right)\left(\varphi_{\varepsilon,R,j}s\right)\right\|^2=\left\|\left(D ^{\mE_{\varepsilon,R}}_{\mF_\varepsilon\oplus\mF_{\varepsilon,1}^\perp,\beta,\gamma}+\frac{\widehat c(\widetilde\sigma)}{\beta}\right)\left(\varphi_{\varepsilon,R,j}s\right)\right\|^2
\\
+\left\langle\sum_{i=1}^q \beta^{-1}c_{\beta,\gamma}\left(\beta^{-1}f_i\right)\left[\nabla^{\mE_{\varepsilon,R}}_{f_i}, \frac{W_{\widehat f_{\varepsilon,R}}}{\beta}\right]\varphi_{\varepsilon,R,j}s, \varphi_{\varepsilon,R,j}s\right\rangle
\\
+\left\langle\sum_{s=1}^{q_1}\gamma\, c_{\beta,\gamma}(\gamma h_s)\left[\nabla^{\mE_{\varepsilon,R}}_{h_s},\frac{W_{\widehat f_{\varepsilon,R}}}{\beta}\right]\varphi_{\varepsilon,R,j}s, \varphi_{\varepsilon,R,j}s\right\rangle
\\
+\left\langle\sum_{j=1}^{q_2}c(e_j)\left[\nabla^{\mE_{\varepsilon,R}}_{e_j},\frac{W_{\widehat f_{\varepsilon,R}}}{\beta}\right]\varphi_{\varepsilon,R,j}s,\varphi_{\varepsilon,R,j}s\right\rangle+\left\|\varphi_{\varepsilon,R,j}\frac{W_{\widehat f_{\varepsilon,R}}}{\beta}s\right\|^2.
\end{multline}

}

From (\ref{0.4}), (\ref{0.9}) and (\ref{0.14}), one has
\begin{align}\label{0.24a}
 \left[\nabla^{\mE_{\varepsilon,R}}, W_{\widehat f_{\varepsilon,R}} \right]
=\widehat f_{\varepsilon,R}^*\left(\left[\nabla^{E_{0}} +\nabla^{E_{1}}  , W \right]\right)
=0\ \ 
{\rm on}\ \ \mM_{\mH_\varepsilon,R}',
\end{align}
while for any $X\in \mF^\perp_{\varepsilon,2} $, one has
\begin{align}\label{0.24b}
 \left[\nabla_X^{\mE_{\varepsilon,R}}, W_{\widehat f_{\varepsilon,R}} \right]=\widehat f_{\varepsilon,R}^*\left(\left[\nabla_{{{\widehat f_{\varepsilon,R\,*}}} (X)}^{E_{0}} +\nabla_{{{\widehat f_{\varepsilon,R\,*}}} (X)}^{E_{1}}  , W \right]\right)
=0\ \ 
{\rm on}\ \ \mM_{\mH_\varepsilon,R}.
\end{align}
Also, since $g^{\mF_\varepsilon}=\widetilde\pi_\varepsilon^*g^{F_\varepsilon}$, one has via (\ref{0.1}) and the first equality in (\ref{0.24b}) that
for any $X\in \mF_{\varepsilon} $,
\begin{align}\label{0.24c}
 \left[\nabla_X^{\mE_{\varepsilon,R}}, W_{\widehat f_{\varepsilon,R}} \right]=O(
\varepsilon|X|)\ \ 
{\rm on}\ \ \mM_{\mH_\varepsilon,R},
\end{align}
and that for  any $X\in \mF^\perp_{\varepsilon,1} $,
\begin{align}\label{0.24d}
 \left[\nabla_X^{\mE_{\varepsilon,R}}, W_{\widehat f_{\varepsilon,R}} \right]=O_{\varepsilon,R}(
 |X|)\ \ 
{\rm on}\ \ \mM_{\mH_\varepsilon,R}.
\end{align}

From (\ref{0.24a})-(\ref{0.24d}), one gets
\begin{multline}\label{1.25}
 \sum_{i=1}^q \beta^{-1}c_{\beta,\gamma}\left(\beta^{-1}f_i\right)\left[\nabla^{\mE_{\varepsilon,R}}_{f_i}, \frac{W_{\widehat f_{\varepsilon,R}}}{\beta}\right] + \sum_{s=1}^{q_1}\gamma\, c_{\beta,\gamma}(\gamma h_s)\left[\nabla^{\mE_{\varepsilon,R}}_{h_s},\frac{W_{\widehat f_{\varepsilon,R}}}{\beta}\right] 
\\
+ \sum_{j=1}^{q_2}c(e_j)\left[\nabla^{\mE_{\varepsilon,R}}_{e_j},\frac{W_{\widehat f_{\varepsilon,R}}}{\beta}\right] 
=O \left(\frac{\varepsilon}{\beta^2}\right)+O_{\varepsilon,R}\left(\frac{\gamma }{\beta}\right).
\end{multline}

Similarly, by proceeding as in (\ref{1.25}) and  \cite[(1.20)]{Z19}, one has for $j=1,2$ that
\begin{align}\label{1.26}
\left|c_{\beta,\gamma}\left(d\varphi_{\varepsilon,R,j}\right)\right|=O\left(\frac{\varepsilon}{\beta}\right)+O_{\varepsilon,R}(\gamma ).
\end{align}

From (\ref{0.23}) and (\ref{1.25}), one has
\begin{multline}\label{1.26a}
\sum_{j=1}^2\left\|\left(D ^{\mE_{\varepsilon,R}}_{\mF_\varepsilon\oplus\mF_{\varepsilon,1}^\perp,\beta,\gamma}+\frac{\widehat c(\widetilde\sigma)}{\beta}
+\frac{W_{\widehat f_{\varepsilon,R}}}{\beta}\right)\left(\varphi_{\varepsilon,R,j}s\right)\right\|^2
\\
=\sum_{j=1}^2\left\|\left(D ^{\mE_{\varepsilon,R}}_{\mF_\varepsilon\oplus\mF_{\varepsilon,1}^\perp,\beta,\gamma}+\frac{\widehat c(\widetilde\sigma)}{\beta}\right)\left(\varphi_{\varepsilon,R,j}s\right)\right\|^2
+\frac{1}{\beta^2} \left\|  {W_{\widehat f_{\varepsilon,R}}}s\right\|^2
+
O \left(\frac{\varepsilon}{\beta^2}\right)\|s\|^2
+O_{\varepsilon,R}\left(\frac{\gamma }{\beta}\right)
\|s\|^2.
\end{multline}

By (\ref{0.12})-(\ref{0.12a}) and proceeding as in \cite[p. 1058-1059]{Z17}, one gets
\begin{multline}\label{1.26b}
 \left\|\left(D ^{\mE_{\varepsilon,R}}_{\mF_\varepsilon\oplus\mF_{\varepsilon,1}^\perp,\beta,\gamma}+\frac{\widehat c(\widetilde\sigma)}{\beta}
 \right)\left(\varphi_{\varepsilon,R,1}s\right)\right\|^2
\geq \frac{\delta}{4\beta^2}\|\varphi_{\varepsilon,R,1}s\|^2
\\
+O_\varepsilon\left(\frac{1}{\beta^2R} \right)
\|\varphi_{\varepsilon,R,1}s\|^2
+
O \left(\frac{\varepsilon^2}{\beta^2}\right)\|\varphi_{\varepsilon,R,1}s\|^2
+O_{\varepsilon,R}\left(\frac{1}{\beta}+\frac{\gamma^2}{\beta^2}\right)
\|\varphi_{\varepsilon,R,1}s\|^2.
\end{multline}

From (\ref{0.18}), we know that 
\begin{align}\label{1.26c}
 \left\| \varphi_{\varepsilon,R,2} {W_{\widehat f_{\varepsilon,R}}}s\right\|^2\geq \delta_1\|\varphi_{\varepsilon,R,2}s\|^2.
\end{align}

From (\ref{1.26a})-(\ref{1.26c}), one finds
\begin{multline}\label{1.26d}
\sum_{j=1}^2\left\|\left(D ^{\mE_{\varepsilon,R}}_{\mF_\varepsilon\oplus\mF_{\varepsilon,1}^\perp,\beta,\gamma}+\frac{\widehat c(\widetilde\sigma)}{\beta}
+\frac{W_{\widehat f_{\varepsilon,R}}}{\beta}\right)\left(\varphi_{\varepsilon,R,j}s\right)\right\|^2
\geq 
\frac{{\rm min}\{\frac{\delta}{4},\delta_1\}}{\beta^2} \left\|  s\right\|^2
\\
+
O \left(\frac{\varepsilon}{\beta^2}\right)\|s\|^2
+O_\varepsilon\left(\frac{1}{\beta^2R} \right)
\| s\|^2
+O_{\varepsilon,R}\left(\frac{1}{\beta}+\frac{\gamma^2}{\beta^2}\right)
\| s\|^2
.
\end{multline}

From (\ref{0.22}), (\ref{1.26}) and (\ref{1.26d}), one gets (\ref{0.16}) easily.

To prove (\ref{0.17}), 
  for any smooth section $s$ in question, one has as in   (\ref{0.22}) that
\begin{multline}\label{1.26e}
 \sqrt{2}\left\|\left( h\left(\frac{\rho}{R}\right)  D ^{\mE_{\varepsilon,R}}_{\mF_\varepsilon\oplus\mF_{\varepsilon,1}^\perp,\beta,\gamma} h\left(\frac{\rho}{R}\right)  
+\frac{\widehat c(\widetilde\sigma)}{\beta}
+\frac{W_{\widehat f_{\varepsilon,R}}}{\beta}\right)s\right\|
\\
\geq \left\|\left(h\left(\frac{\rho}{R}\right)  D ^{\mE_{\varepsilon,R}}_{\mF_\varepsilon\oplus\mF_{\varepsilon,1}^\perp,\beta,\gamma}h\left(\frac{\rho}{R}\right)  
+\frac{\widehat c(\widetilde\sigma)}{\beta}
+\frac{W_{\widehat f_{\varepsilon,R}}}{\beta}\right)\left(\varphi_{\varepsilon,R,1}s\right)\right\|
\\
+\left\| \left(h\left(\frac{\rho}{R}\right)  D ^{\mE_{\varepsilon,R}}_{\mF_\varepsilon\oplus\mF_{\varepsilon,1}^\perp,\beta,\gamma}h\left(\frac{\rho}{R}\right)  
+\frac{\widehat c(\widetilde\sigma)}{\beta}
+\frac{W_{\widehat f_{\varepsilon,R}}}{\beta}\right)\left(\varphi_{\varepsilon,R,2}s\right)\right\| 
-\left\|c_{\beta,\gamma}\left(d\varphi_{\varepsilon,R,1}\right)s\right\|
\\
-\left\|c_{\beta,\gamma}\left(d\varphi_{\varepsilon,R,2}\right)s\right\|.
\end{multline}

Clearly (cf. \cite[(2.29)]{Z17}),
\begin{multline}\label{0.25}
\left(h\left(\frac{\rho}{R}\right) D ^{\mE_{\varepsilon,R}}_{\mF_\varepsilon\oplus\mF_{\varepsilon,1}^\perp,\beta,\gamma}
 h\left(\frac{\rho}{R}\right) 
+\frac{\widehat c\left(\widetilde\sigma\right)}{\beta}
+\frac{W_{\widehat f_{\varepsilon,R}}}{\beta}\right)^2
\\
=\left(h\left(\frac{\rho}{R}\right) D ^{\mE_{\varepsilon,R}}_{\mF_\varepsilon\oplus\mF_{\varepsilon,1}^\perp,\beta,\gamma}
 h\left(\frac{\rho}{R}\right) 
+\frac{\widehat c\left(\widetilde\sigma\right)}{\beta}\right)^2
+h\left(\frac{\rho}{R}\right) ^2\left[D ^{\mE_{\varepsilon,R}}_{\mF_\varepsilon\oplus\mF_{\varepsilon,1}^\perp,\beta,\gamma}
  ,\frac{W_{\widehat f_{\varepsilon,R}}}{\beta}\right]
 +\frac{W^2_{\widehat f_{\varepsilon,R}}}{\beta^2}
 \\
 =\left(h\left(\frac{\rho}{R}\right) D ^{\mE_{\varepsilon,R}}_{\mF_\varepsilon\oplus\mF_{\varepsilon,1}^\perp,\beta,\gamma}
 h\left(\frac{\rho}{R}\right)\right)^2+\frac{h(\frac{\rho}{R})^2}{\beta}\left[D ^{\mE_{\varepsilon,R}}_{\mF_\varepsilon\oplus\mF_{\varepsilon,1}^\perp,\beta,\gamma},\widehat c\left(\widetilde\sigma\right)\right]+\frac{\left|\widetilde\sigma\right|^2}{\beta^2}
 \\
+\frac{h\left(\frac{\rho}{R}\right)^2}{\beta} \left[ D ^{\mE_{\varepsilon,R}}_{\mF_\varepsilon\oplus\mF_{\varepsilon,1}^\perp,\beta,\gamma}
  , {W_{\widehat f_{\varepsilon,R}}}{ }\right]
 +\frac{W^2_{\widehat f_{\varepsilon,R}}}{\beta^2}.
  \end{multline}

From (\ref{1.25}) and the first equality in (\ref{0.25}), one has
\begin{multline}\label{0.25a}
\left\|\left(h\left(\frac{\rho}{R}\right) D ^{\mE_{\varepsilon,R}}_{\mF_\varepsilon\oplus\mF_{\varepsilon,1}^\perp,\beta,\gamma}
 h\left(\frac{\rho}{R}\right) 
+\frac{\widehat c\left(\widetilde\sigma\right)}{\beta}
+\frac{W_{\widehat f_{\varepsilon,R}}}{\beta}\right)\left(\varphi_{\varepsilon,R,2}s\right)\right\|^2
\\
\geq 
\frac{1}{\beta^2} \left\|  \varphi_{\varepsilon,R,2}W_{\widehat f_{\varepsilon,R}}s \right\|^2
 +O \left(\frac{\varepsilon}{\beta^2}\right)\|s\|^2+O_{\varepsilon,R}\left(\frac{\gamma }{\beta}\right)\|s\|^2.
  \end{multline}

By proceeding as in \cite[(2.27)]{Z17}, one has on 
$  \mM_{\mH_\varepsilon,R}\setminus s(M_{H_\varepsilon})$ that
\begin{align}\label{0.25b}
  \left[D ^{\mE_{\varepsilon,R}}_{\mF_\varepsilon\oplus\mF_{\varepsilon,1}^\perp,\beta,\gamma},\widehat c\left( \sigma\right)\right]
=O_\varepsilon\left(\frac{1}{\beta R}\right) + O_{\varepsilon,R}(1).
\end{align}

From (\ref{1.25}), the second equality in (\ref{0.25}) and (\ref{0.25b}), one gets
\begin{multline}\label{0.25c}
\left\|\left(h\left(\frac{\rho}{R}\right) D ^{\mE_{\varepsilon,R}}_{\mF_\varepsilon\oplus\mF_{\varepsilon,1}^\perp,\beta,\gamma}
 h\left(\frac{\rho}{R}\right) 
+\frac{\widehat c\left(\widetilde\sigma\right)}{\beta}
+\frac{W_{\widehat f_{\varepsilon,R}}}{\beta}\right)\left(\varphi_{\varepsilon,R,1}s\right)\right\|^2
\geq 
\frac{1}{\beta^2} \left\|  \varphi_{\varepsilon,R,1} s \right\|^2
\\
+
\frac{1}{\beta^2} \left\|  \varphi_{\varepsilon,R,1}W_{\widehat f_{\varepsilon,R}}s \right\|^2
 +O \left(\frac{\varepsilon}{\beta^2}\right)\|s\|^2
+O_\varepsilon\left(\frac{1}{\beta^2 R}\right) \|s\|^2+ O_{\varepsilon,R}\left(\frac{1}{\beta}\right)\|s\|^2.
  \end{multline}

From (\ref{1.26c}),  (\ref{0.25a}) and (\ref{0.25c}), one gets
\begin{multline}\label{0.25d}
\sum_{j=1}^2
\left\|\left(h\left(\frac{\rho}{R}\right) D ^{\mE_{\varepsilon,R}}_{\mF_\varepsilon\oplus\mF_{\varepsilon,1}^\perp,\beta,\gamma}
 h\left(\frac{\rho}{R}\right) 
+\frac{\widehat c\left(\widetilde\sigma\right)}{\beta}
+\frac{W_{\widehat f_{\varepsilon,R}}}{\beta}\right)\left(\varphi_{\varepsilon,R,j}s\right)\right\|^2
\\
\geq 
\frac{1}{\beta^2} \left\|  \varphi_{\varepsilon,R,1} s \right\|^2
+
\frac{1}{\beta^2} \left\|   W_{\widehat f_{\varepsilon,R}}s \right\|^2
 +O \left(\frac{\varepsilon}{\beta^2}\right)\|s\|^2
+O_\varepsilon\left(\frac{1}{\beta^2 R}\right) \|s\|^2+ O_{\varepsilon,R}\left(\frac{1}{\beta}\right)\|s\|^2
\\
\geq
\frac{{\rm min}\{1,\delta_1\}}{\beta^2} \left\|   s \right\|^2
 +O \left(\frac{\varepsilon}{\beta^2}\right)\|s\|^2
+O_\varepsilon\left(\frac{1}{\beta^2 R}\right) \|s\|^2+ O_{\varepsilon,R}\left(\frac{1}{\beta}\right)\|s\|^2.
  \end{multline}

From (\ref{1.26}) and (\ref{0.25d}), one gets (\ref{0.17}) easily. 
\end{proof}

\subsection{Elliptic operators on
$\widetilde \mN_{ \varepsilon,R}$}\label{s1.4}
Let $Q$ be a Hermitian vector bundle over $\widehat \mM_{\mH_\varepsilon,R}$ such that $\left(S_{\beta,\gamma}  \left(\mF_\varepsilon\oplus\mF_{\varepsilon,1}^\perp\right)\widehat\otimes
\Lambda^*\left (\mF_{\varepsilon,2}^\perp \right)\widehat \otimes \mE_{\varepsilon,R}\right)_{-}\oplus Q$ is a trivial vector bundle over $\widehat \mM_{\mH_\varepsilon,R}$. Then $\left(S_{\beta,\gamma}  \left(\mF_\varepsilon\oplus\mF_{\varepsilon,1}^\perp\right)\widehat\otimes
\Lambda^*\left (\mF_{\varepsilon,2}^\perp \right)\widehat \otimes \mE_{\varepsilon,R}\right)_{+}\oplus Q$ is a trivial vector bundle near $\partial \widehat \mM_{\mH_\varepsilon,R}$, under the identification $\widehat c(\sigma)+\widehat f_{\varepsilon,R}^*(w)+{\rm Id}_Q$.

By obviously extending the above trivial vector bundles to $\mN _{\varepsilon,R}$, we get a ${\bf Z}_2$-graded Hermitian vector bundle $\xi=\xi_+\oplus \xi_-$ over $\widetilde  \mN_{\varepsilon,R}$ and an odd self-adjoint endomorphism $V=v+v^*\in \Gamma({\rm End}(\xi))$ (with $v:\Gamma(\xi_+)\to \Gamma(\xi_-)$, $v^*$ being the adjoint of $v$) such that 
\begin{align}\label{0.26}
\xi_\pm=\left(S_{\beta,\gamma}  \left(\mF_\varepsilon\oplus\mF_{\varepsilon,1}^\perp\right)\widehat\otimes
\Lambda^*\left (\mF_{\varepsilon,2}^\perp \right)\widehat \otimes \mE_{\varepsilon,R}\right)_\pm\oplus Q
\end{align}
over $\widehat \mM_{\mH_\varepsilon,R}$, $V$ is invertible on $\mN _{\varepsilon,R}$ and 
\begin{align}\label{0.27}
V=  \widehat c\left(\widetilde\sigma\right)+W_{\widehat f_{\varepsilon,R}}+\left(\begin{array}{clcr} 0 & {\rm Id}_Q\\
 {\rm Id}_Q & 0\\ \end{array} \right)
\end{align}
on $\widehat \mM_{\mH_\varepsilon,R}$, which is invertible on $\widehat\mM_{\mH_\varepsilon,R}\setminus  \mM_{\mH_\varepsilon,\frac{R}{2}}$.

Recall that $h(\frac{\rho}{R})$ vanishes near $\mM_{\mH_\varepsilon,R}\cap \partial  \mM_{\varepsilon,R}$. We extend it to a function on $\widetilde \mN_{ \varepsilon,R}$ which equals to zero on $\mN _{\varepsilon,R}$ and an open neighborhood of $\partial \widehat \mM_{\mH_\varepsilon,R}$ in $\widetilde \mN_{ \varepsilon,R}$, and we denote the resulting function on $\widetilde \mN_{ \varepsilon,R}$ by $\widetilde h_R$.

 Let $\pi_{\widetilde \mN_{ \varepsilon,R}}: T\widetilde \mN_{ \varepsilon,R}\to \widetilde \mN_{ \varepsilon,R}$ be the projection of the tangent bundle of $\widetilde \mN_{ \varepsilon,R}$. Let $\gamma^{\widetilde \mN_{ \varepsilon,R}}\in {\rm Hom} (\pi^*_{\widetilde \mN_{ \varepsilon,R}}\xi_+,\pi^*_{\widetilde \mN_{ \varepsilon,R}}\xi_-)$ be the symbol defined by 
\begin{align}\label{0.28}
\gamma^{\widetilde \mN_{ \varepsilon,R}}(p,u)=\pi^*_{\widetilde \mN_{ \varepsilon,R}}\left(\sqrt{-1} \widetilde h ^2_R c_{\beta,\gamma}(u)+v(p)\right)\ \ {\rm for}\ \ p\in \widetilde \mN_{ \varepsilon,R},\ \ u\in T_p \widetilde \mN_{ \varepsilon,R}.
\end{align}
By (\ref{0.27}) and (\ref{0.28}), $\gamma^{\widetilde\mN_{ \varepsilon,R}}$ is singular only if $u=0$ and $p\in\mM_{\mH_\varepsilon,\frac{R}{2}}$. Thus $\gamma^{\widetilde \mN_{ \varepsilon,R}}$ is an elliptic symbol.

On the other hand, it is clear that $\widetilde h_R  D ^{\mE_{\varepsilon,R}}_{\mF_\varepsilon\oplus\mF_{\varepsilon,1}^\perp,\beta,\gamma}\widetilde h_R$ is well defined on $\widetilde \mN_{ \varepsilon,R}$ if we define it to equal to zero on $\widetilde \mN_{ \varepsilon,R}\setminus \widehat \mM_{\mH_\varepsilon,R}$.

Let $A: L^2 (\xi)\to L^2 (\xi)$ be a second order positive elliptic differential operator on $\widetilde \mN_{ \varepsilon,R}$ preserving the ${\bf Z}_2$-grading of $\xi=\xi_+\oplus \xi_-$, such that its symbol equals to $|\eta|^2$ at $\eta\in T\widetilde \mN_{ \varepsilon,R}$.\footnote{To be more precise, here $A$ also depends on the defining metric. We omit the corresponding subscript/superscript only for convenience.}
As in \cite[(2.33)]{Z17}, let $P^{\mE_{\varepsilon,R}}_{\varepsilon, R,\beta,\gamma}:L^2 (\xi)\to L^2 (\xi)$ be the zeroth order pseudodifferential operator on $\widetilde \mN_{ \varepsilon,R}$ defined by
\begin{align}\label{0.29}
P^{\mE_{\varepsilon,R}}_{\varepsilon,R,\beta,\gamma}=A^{-\frac{1}{4}}\widetilde h_R D ^{\mE_{\varepsilon,R}}_{\mF_\varepsilon\oplus\mF_{\varepsilon,1}^\perp,\beta,\gamma}\widetilde h_R A^{-\frac{1}{4}}+\frac{V}{\beta}.
\end{align}
Let $P^{\mE_{\varepsilon,R}}_{\varepsilon,R,\beta,\gamma,+}:L^2(\xi_+)\to L^2(\xi_-)$ be the obvious restriction. Then the principal symbol of $P^{\mE_{\varepsilon,R}}_{\varepsilon,R,\beta,\gamma,+}$, which we denote by $\gamma(P^{\mE_{\varepsilon,R}}_{\varepsilon,R,\beta,\gamma,+})$, is homotopic through elliptic symbols to $\gamma^{\widetilde \mN_{ \varepsilon,R}}$. Thus $P^{\mE_{\varepsilon,R}}_{\varepsilon,R,\beta,\gamma,+}$ is a Fredholm operator. Moreover, by the Atiyah-Singer index theorem \cite{ASI} (cf. \cite[Th. 13.8 of Ch. III]{LaMi89}) and the computation in \cite[\S 5]{GL83}, one finds
\begin{multline}\label{0.30}
{\rm ind}\left(P^{\mE_{\varepsilon,R}}_{\varepsilon,R,\beta,\gamma,+}\right)
={\rm ind}\left(\gamma\left(P^{\mE_{\varepsilon,R}}_{\varepsilon,R,\beta,\gamma,+}\right)\right)
={\rm ind}\left(\gamma^{\widetilde \mN_{ \varepsilon,R}}\right)
\\
=\left\langle \widehat A\left(T\widehat M_{H_\varepsilon}\right) 
\left({\rm ch}\left( f_\varepsilon^*E_0 \right)-{\rm ch}\left( f_\varepsilon^*E_1  \right)\right),\left[\widehat M_{H_\varepsilon}\right]\right\rangle
=\left({\rm deg}(f_\varepsilon)\right)\left\langle {\rm ch}\left(E_0\right),\left[S^{\dim M}(1)\right]\right\rangle
\neq 0,
\end{multline}
where the inequality comes from (\ref{0.4}).

For any $0\leq t\leq 1$, set
\begin{align}\label{0.31}
P^{\mE_{\varepsilon,R}}_{\varepsilon,R,\beta,\gamma,+}(t)=P^{\mE_{\varepsilon,R}}_{\varepsilon,R,\beta,\gamma,+}+\frac{(t-1)v}{\beta}+A^{-\frac{1}{4}}\frac{(1-t)v}{\beta}A^{-\frac{1}{4}}.
\end{align}
Then $P^{\mE_{\varepsilon,R}}_{\varepsilon,R,\beta,\gamma,+}(t)$ is a smooth family of zeroth order pseudodifferential operators such that the corresponding symbol $\gamma(P^{\mE_{\varepsilon,R}}_{\varepsilon,R,\beta,\gamma,+}(t))$ is elliptic for $0<t\leq 1$. Thus $P^{\mE_{\varepsilon,R}}_{\varepsilon,R,\beta,\gamma,+}(t)$ is a continuous family of Fredholm operators for $0<t\leq 1$ with $P^{\mE_{\varepsilon,R}}_{\varepsilon,R,\beta,\gamma,+}(1)=P^{\mE_{\varepsilon,R}}_{\varepsilon,R,\beta,\gamma,+}$.

Now since $P^{\mE_{\varepsilon,R}}_{\varepsilon,R,\beta,\gamma,+}(t)$ is continuous on the whole $[0,1]$, if $P^{\mE_{\varepsilon,R}}_{\varepsilon,R,\beta,\gamma,+}(0)$ is Fredholm and has vanishing index, then we would reach a contradiction with respect to (\ref{0.30}), and then complete the proof of Theorem \ref{t0.1}.

Thus we need only to prove the following analogue of \cite[Proposition 2.5]{Z17}.

\begin{prop}\label{t0.5}
There exist $\varepsilon, R,\beta,\gamma>0$ such that the following identity holds:
\begin{align}\label{0.32}
\dim\left({\rm ker}\left(P^{\mE_{\varepsilon,R}}_{\varepsilon,R,\beta,\gamma,+}(0)\right)\right)=\dim\left({\rm ker}\left(P^{\mE_{\varepsilon,R}}_{\varepsilon,R,\beta,\gamma,+}(0)^*\right)\right)=0.
\end{align}
\end{prop}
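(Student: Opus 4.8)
The plan is to prove Proposition \ref{t0.5} by establishing a uniform lower bound for $P^{\mE_{\varepsilon,R}}_{\varepsilon,R,\beta,\gamma,+}(0)$ and its adjoint, which forces both kernels to vanish. The operator at $t=0$ is
$$P^{\mE_{\varepsilon,R}}_{\varepsilon,R,\beta,\gamma,+}(0)=A^{-\frac14}\widetilde h_R D ^{\mE_{\varepsilon,R}}_{\mF_\varepsilon\oplus\mF_{\varepsilon,1}^\perp,\beta,\gamma}\widetilde h_R A^{-\frac14}+A^{-\frac14}\frac{v}{\beta}A^{-\frac14},$$
so it has the form $A^{-\frac14}Q_{\beta,\gamma}A^{-\frac14}$ where $Q_{\beta,\gamma}=\widetilde h_R D ^{\mE_{\varepsilon,R}}_{\mF_\varepsilon\oplus\mF_{\varepsilon,1}^\perp,\beta,\gamma}\widetilde h_R + \frac{v}{\beta}$ (restricted to $\xi_+$). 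Since $A^{-\frac14}$ is an invertible self-adjoint pseudodifferential operator, it suffices to show that the full (ungraded) operator $\widetilde h_R D ^{\mE_{\varepsilon,R}}_{\mF_\varepsilon\oplus\mF_{\varepsilon,1}^\perp,\beta,\gamma}\widetilde h_R + \frac{V}{\beta}$ on $\widetilde\mN_{\varepsilon,R}$ is bounded below by $\frac{c}{\beta}$ in $L^2$-norm for suitable $\varepsilon,R$ and small $\beta,\gamma$; the off-diagonal structure then gives the same for $Q_{\beta,\gamma}$ and $Q_{\beta,\gamma}^*$.

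First I would split a section $s\in L^2(\xi)$ using a partition of unity adapted to the decomposition $\widetilde\mN_{\varepsilon,R}=\mN_{\varepsilon,R}\cup \widehat\mM_{\mH_\varepsilon,R}$, and within $\widehat\mM_{\mH_\varepsilon,R}$ further separate the region where $\widetilde h_R\equiv 1$ (the interior, i.e. the part of $\mM_{\mH_\varepsilon,R}$ away from $\partial\mM_{\varepsilon,R}$) from the collar where $h(\rho/R)$ transitions and from $\mM_{\mH_\varepsilon,R}\setminus\mM_{\mH_\varepsilon,R/2}$. On $\mN_{\varepsilon,R}$ the operator $\widetilde h_R D\widetilde h_R$ vanishes and $V$ is invertible with a fixed positive lower bound (by (\ref{0.27}) and the construction on $\mN_{\varepsilon,R}$), so $\|(\widetilde h_R D\widetilde h_R + \frac{V}{\beta})s\|\geq \frac{c}{\beta}\|s\|$ there. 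On the region where $\widetilde h_R\equiv 1$ one applies Lemma \ref{t0.4}(i) directly to get the bound $\frac{c_0}{\beta}\|s\|$. On the transitional/outer annulus $\mM_{\mH_\varepsilon,R}\setminus\mM_{\mH_\varepsilon,R/2}$ one uses Lemma \ref{t0.4}(ii), noting that there $\widetilde h_R=h(\rho/R)$ so $\widetilde h_R D\widetilde h_R$ matches the operator appearing in (\ref{0.17}). Gluing these estimates via the IMS localization formula (the usual trick of writing $\|Ps\|^2=\sum_j\|\varphi_j Ps\|^2$ and commuting $P$ past the $\varphi_j$, absorbing the commutator terms $c_{\beta,\gamma}(d\varphi_j)$ which are $O_{\varepsilon,R}(1)$ hence $o(1/\beta)$ as $\beta\to 0$) yields a uniform lower bound $\frac{c}{\beta}\|s\|$ for the whole operator on $\widetilde\mN_{\varepsilon,R}$.

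Having bounded the ungraded operator below, I would conjugate back: $P^{\mE_{\varepsilon,R}}_{\varepsilon,R,\beta,\gamma,+}(0)=A^{-\frac14}Q_{\beta,\gamma,+}A^{-\frac14}$ with $Q_{\beta,\gamma,+}$ injective (indeed bounded below), and since $A^{-\frac14}$ is invertible, $\ker(P^{\mE_{\varepsilon,R}}_{\varepsilon,R,\beta,\gamma,+}(0))=0$; the same argument applied to $\xi_-$ and $v^*$ gives $\ker(P^{\mE_{\varepsilon,R}}_{\varepsilon,R,\beta,\gamma,+}(0)^*)=0$. This establishes (\ref{0.32}). The main obstacle, as in \cite[Prop.~2.5]{Z17}, is the collar region near $\partial\widehat\mM_{\mH_\varepsilon,R}$ where $\widetilde h_R$ is cut off to zero: there the Dirac term degenerates, and one must rely entirely on the invertibility of $V$ supplied by $\widehat c(\widetilde\sigma)$ (which is transversal, hence nonzero, on $\partial\widehat\mM_{\mH_\varepsilon,R}$) together with $W_{\widehat f_{\varepsilon,R}}$; one has to check carefully that the cross-terms between $h(\rho/R)^2[D,\widehat c(\widetilde\sigma)]$ and the various $O$-terms in (\ref{0.25}) do not overwhelm the $\frac{|\widetilde\sigma|^2+W^2_{\widehat f_{\varepsilon,R}}}{\beta^2}$ positive contribution, which is exactly what the orders recorded in (\ref{0.25b}) and (\ref{1.25}) are designed to guarantee once $R$ is large and $\beta,\gamma$ small. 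A secondary point requiring care is the order in which the parameters are chosen: first fix $\varepsilon$ small enough to kill the $O(\varepsilon/\beta^2)$ curvature terms against $\min\{\delta/4,\delta_1\}/\beta^2$, then $R$ large to kill $O_\varepsilon(1/(\beta^2 R))$, then $\beta,\gamma$ small to kill the remaining $O_{\varepsilon,R}(1/\beta+\gamma^2/\beta^2+\gamma/\beta)$ terms — precisely the hierarchy already fixed in Lemma \ref{t0.4}.
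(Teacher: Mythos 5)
Your proposal is correct and follows essentially the same route as the paper: the paper likewise reduces to the ungraded, formally self-adjoint operator $\widetilde h_R D^{\mE_{\varepsilon,R}}_{\mF_\varepsilon\oplus\mF_{\varepsilon,1}^\perp,\beta,\gamma}\widetilde h_R+\tfrac{V}{\beta}$ via the invertibility of $A^{-\frac{1}{4}}$, kills the part of a kernel element on $\widetilde{\mN}_{\varepsilon,R}\setminus\widehat{\mM}_{\mH_\varepsilon,R}$ and on the $Q\oplus Q$ summand using the invertibility of $V$ there, and then establishes the lower bound $\tfrac{c_2}{\beta}\|s_1\|$ on the remaining component by the same $\varphi_{\varepsilon,R,1},\varphi_{\varepsilon,R,2}$ splitting and the estimates of Lemma \ref{t0.4}. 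The only cosmetic difference is that the paper argues directly that a kernel element vanishes rather than stating a global lower bound first, and it keeps the same parameter hierarchy ($\varepsilon$, then $R$, then $\beta,\gamma$) that you describe.
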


\begin{proof}
Let $P^{\mE_{\varepsilon,R}}_{\varepsilon,R,\beta,\gamma}(0): L^2 (\xi)\to L^2 (\xi)$ be given by
\begin{align}\label{0.33}
P^{\mE_{\varepsilon,R}}_{\varepsilon,R,\beta,\gamma}(0)=A^{-\frac{1}{4}}\widetilde h_R D ^{\mE_{\varepsilon,R}}_{\mF_\varepsilon\oplus\mF_{\varepsilon,1}^\perp,\beta,\gamma}\widetilde h_R A^{-\frac{1}{4}}+A^{-\frac{1}{4}}\frac{V}{\beta}A^{-\frac{1}{4}}.
\end{align}

Since $P^{\mE_{\varepsilon,R}}_{\varepsilon,R,\beta,\gamma}(0)$ is formally self-adjoint, by (\ref{0.29}) and (\ref{0.31}) we need only to show that 
\begin{align}\label{0.34}
\dim\left({\rm ker}\left(P^{\mE_{\varepsilon,R}}_{\varepsilon,R,\beta,\gamma}(0)\right)\right)=0
\end{align}
for certain $\varepsilon, R, \beta,\gamma>0$. 

Let $s\in {\rm ker}(P^{\mE_{\varepsilon,R}}_{\varepsilon,R,\beta,\gamma}(0))$. By (\ref{0.33}) one has
\begin{align}\label{0.35}
\left(\widetilde h_R D ^{\mE_{\varepsilon,R}}_{\mF_\varepsilon\oplus\mF_{\varepsilon,1}^\perp,\beta,\gamma}\widetilde h_R+\frac{V}{\beta}\right)A^{-\frac{1}{4}}s=0.
\end{align}

Since $\widetilde h_R=0$ on $\widetilde \mN_{ \varepsilon,R}\setminus \widehat \mM_{\mH_\varepsilon,R}$, while $V$ is invertible on $\widetilde \mN_{ \varepsilon,R}\setminus \widehat \mM_{\mH_\varepsilon,R}$, by (\ref{0.35}) one has
\begin{align}\label{0.36}
A^{-\frac{1}{4}}s=0\ \ {\rm on}\ \ \widetilde \mN_{ \varepsilon,R}\setminus \widehat \mM_{\mH_\varepsilon,R}.
\end{align}

Write on $\widehat \mM_{\mH_\varepsilon,R}$ that
\begin{align}\label{0.37}
A^{-\frac{1}{4}}s=s_1+s_2,
\end{align}
with $s_1\in L^2 (S_{\beta,\gamma}  \left(\mF_\varepsilon\oplus\mF_{\varepsilon,1}^\perp\right)\widehat\otimes
\Lambda^*\left (\mF_{\varepsilon,2}^\perp \right)\widehat \otimes \mE_{\varepsilon,R})$ and $s_2\in L^2 (Q\oplus Q)$.

By (\ref{0.27}), (\ref{0.35}) and (\ref{0.37}), one has
\begin{align}\label{0.38}
s_2=0,
\end{align}
while
\begin{align}\label{0.39}
\left(\widetilde h_R D ^{\mE_{\varepsilon,R}}_{\mF_\varepsilon\oplus\mF_{\varepsilon,1}^\perp,\beta,\gamma}\widetilde h_R+\frac{ \widehat c\left(\widetilde\sigma\right)}{\beta}+\frac{W_{\widehat f_{\varepsilon,R}}}{\beta}\right)s_1=0.
\end{align}

We need to show that (\ref{0.39}) implies $s_1=0$.  

As in (\ref{1.26e}), one has 
\begin{multline}\label{0.40}
 \sqrt{2}\left\|\left(\widetilde h_R  D ^{\mE_{\varepsilon,R}}_{\mF_\varepsilon\oplus\mF_{\varepsilon,1}^\perp,\beta,\gamma} \widetilde h_R 
+\frac{\widehat c(\widetilde\sigma)}{\beta}
+\frac{W_{\widehat f_{\varepsilon,R}}}{\beta}\right)s_1\right\|
\\
\geq \left\|\left(\widetilde h_R  D ^{\mE_{\varepsilon,R}}_{\mF_\varepsilon\oplus\mF_{\varepsilon,1}^\perp,\beta,\gamma}\widetilde h_R 
+\frac{\widehat c(\widetilde\sigma)}{\beta}
+\frac{W_{\widehat f_{\varepsilon,R}}}{\beta}\right)\left(\varphi_{\varepsilon,R,1}s_1\right)\right\|
\\
+\left\| \left(\widetilde h_R   D ^{\mE_{\varepsilon,R}}_{\mF_\varepsilon\oplus\mF_{\varepsilon,1}^\perp,\beta,\gamma}\widetilde h_R  
+\frac{\widehat c(\widetilde\sigma)}{\beta}
+\frac{W_{\widehat f_{\varepsilon,R}}}{\beta}\right)\left(\varphi_{\varepsilon,R,2}s_1\right)\right\| 
-\left\|c_{\beta,\gamma}\left(d\varphi_{\varepsilon,R,1}\right)s_1\right\|
\\
-\left\|c_{\beta,\gamma}\left(d\varphi_{\varepsilon,R,2}\right)s_1\right\|.
\end{multline}

By proceeding as in the proof of (\ref{0.25a}), one gets
\begin{multline}\label{0.41}
\left\|\left(\widetilde h_R D ^{\mE_{\varepsilon,R}}_{\mF_\varepsilon\oplus\mF_{\varepsilon,1}^\perp,\beta,\gamma}
 \widetilde h_R 
+\frac{\widehat c\left(\widetilde\sigma\right)}{\beta}
+\frac{W_{\widehat f_{\varepsilon,R}}}{\beta}\right)\left(\varphi_{\varepsilon,R,2}s_1\right)\right\|^2
\\
\geq 
\frac{1}{\beta^2} \left\|  \varphi_{\varepsilon,R,2}W_{\widehat f_{\varepsilon,R}}s_1 \right\|^2
 +O \left(\frac{\varepsilon}{\beta^2}\right)\|s_1\|^2+O_{\varepsilon,R}\left(\frac{\gamma }{\beta}\right)\|s_1\|^2.
  \end{multline}

On the other hand, by using Lemma \ref{t0.4} and proceeding as in \cite[p. 1062]{Z17}, one finds that there exist $c_1>0$, $\varepsilon>0$ and $R>0$ such that when 
$\beta,\,\gamma>0$ are sufficiently small, one has 
\begin{align}\label{0.42}
\left\|\left(\widetilde h_R D ^{\mE_{\varepsilon,R}}_{\mF_\varepsilon\oplus\mF_{\varepsilon,1}^\perp,\beta,\gamma}
 \widetilde h_R 
+\frac{\widehat c\left(\widetilde\sigma\right)}{\beta}
+\frac{W_{\widehat f_{\varepsilon,R}}}{\beta}\right)\left(\varphi_{\varepsilon,R,1}s_1\right)\right\| 
\geq 
\frac{c_1}{\beta} \left\|  \varphi_{\varepsilon,R,1} s_1 \right\| .
  \end{align}

From (\ref{1.26}), (\ref{1.26c}) and (\ref{0.40})-(\ref{0.42}), 
one finds that there exist $c_2>0$, $\varepsilon>0$ and $R>0$ such that when 
$\beta,\,\gamma>0$ are sufficiently small, one has 
\begin{align}\label{0.43}
\left\|\left(\widetilde h_R D ^{\mE_{\varepsilon,R}}_{\mF_\varepsilon\oplus\mF_{\varepsilon,1}^\perp,\beta,\gamma}
 \widetilde h_R 
+\frac{\widehat c\left(\widetilde\sigma\right)}{\beta}
+\frac{W_{\widehat f_{\varepsilon,R}}}{\beta}\right)  s_1 \right\| 
\geq 
\frac{c_2}{\beta} \left\|  s_1 \right\| ,
  \end{align}
which implies, via (\ref{0.39}), $s_1=0$. 
\end{proof}

\begin{rem}\label{t0.6}
 By combining the above method with what in \cite[\S 2.5]{Z17}, one gets a proof of Theorem \ref{t0.3}. We leave the details to the interested reader.   
\end{rem} 

\begin{rem}\label{t0.7}
From the above proof, one sees that for Theorems \ref{t0.1} and \ref{t0.3} to hold, one need only to assume that (\ref{0.1}) holds for $X\in \Gamma(F_\varepsilon)$.  Moreover, when $M$ is compact and $M_\varepsilon$ might be noncompact, the above proof can also be seen as to complete in details the proof of the main results in \cite{Z19} for non-compactly enlargeable foliations. 
\end{rem} 

$\ $

\noindent{\bf Acknowledgments.} This work was partially supported by MOEC and NNSFC.

\def\cprime{$'$} \def\cprime{$'$}
\providecommand{\bysame}{\leavevmode\hbox to3em{\hrulefill}\thinspace}
\providecommand{\MR}{\relax\ifhmode\unskip\space\fi MR }
\providecommand{\MRhref}[2]{%
  \href{http://www.ams.org/mathscinet-getitem?mr=#1}{#2}
}
\providecommand{\href}[2]{#2}

\end{document}